\documentclass[letterpaper,12pt]{article}
\usepackage{amsmath,amsthm,amsfonts}
\usepackage{amssymb}
\usepackage{setspace}
\usepackage{lineno} 

\usepackage[latin1]{inputenc}

\newtheorem{theorem}{Theorem}

\newtheorem{proposition}{Proposition}
\newtheorem{corollary}{Corollary}
\theoremstyle{remark}
\newtheorem{remark}{Remark}

\onehalfspace
\def\nn{{\mathbb N}}


\def\EE{{\mathcal E}}  
 \def\II{{\mathcal I}}

\setlength{\oddsidemargin}{0.1in}  
\setlength{\textwidth}{6.3in}
\setlength{\textheight}{8.5in}
\setlength{\topmargin}{0.0in}

\title{{\bf On the number of $m$th roots of permutations}
\footnotetext{2000 {\it Mathematics Subject Classification: 05A05, 05A15.}}
\footnotetext{{\it Key words and phrases}: Permutations, $m$th roots, Enumeration.} 
\footnotetext{Rivera-Mart\'{i}nez was  partially supported by PROMEP (SEP), grant UAZ-PTC-103.}
}
\author{
Jes\'{u}s Lea\~{n}os, 
Rutilo Moreno and Luis M. Rivera-Mart\'{i}nez\\
}
\date{}

\begin{document}

\maketitle
\begin{abstract}
  Let $m$ be a fixed positive integer. It is well-known that a permutation $\sigma$ of $\{1,...,n\}$ may have one, many, or no $m$th roots. In this article we provide an explicit expression and a generating function for the number of $m$th roots of $\sigma$. Let $p_m(n)$ be the probability that a random $n$-permutation has an $m$th root. We also include a proof of the fact that $p_m(jq)=p_m(jq+1)=\cdots =p_m(jq+(q-1))$, $j=0,1,...$, when $m$ is a power of prime number $q$.
  \end{abstract}
\section{Introduction and main results}
Let $S_n$ be the group of all permutations of the finite set $[n]=\{1,...,n\}$. Let $m$ be a fixed positive integer. We say that $\sigma \in S_n$ has an $m$th root or that $\sigma$ is an $m$th power if there exists a permutation $\tau \ \in S_n$ with $\tau^m=\sigma$. For fixed $m$, not all permutations have an $m$th root (\cite{wilf}, Theorem 4.8.2), however, L. Glebsky and L. M. Rivera \cite{glebskyrivera} have proved that for sufficiently large $n$, any permutation has an ``almost" $m$th root (in the sense of the Hamming distance \cite{dezahuang}). Now, if we know that a permutation $\sigma$ has at least one $m$th root, how many $m$th roots can $\sigma$ have? We can find an explicit expression for this quantity in the paper of A. I. Pavlov \cite{pavlov1}.  Also, in the article of S. Annin, T. Jansen and C. Smith~\cite{annin}, appeared a classification of the elements in $S_n$ and $A_n$ that has $m$th roots, and they propose some problems related with the roots of permutations. In particular, our work is about some questions on Problem 1 in Section 4 of \cite{annin}.

The main results of this paper are an explicit expression for the number of $m$th roots of any $n$-permutation $\sigma$ (Theorem~\ref{the1}) and a generating function for this number (Theorem~\ref{the2}). In order to obtain our expression we define some sets of non-negative integers that seem interesting by themselves (see Section~\ref{sec2}). In particular, such sets provide a simpler expression than the corresponding expression in \cite{pavlov1}. Moreover, this new expression allows us to compute the number of $m$th roots of a permutation in an effective way using a computer algebra system.

Another classical problem consists in estimating the number of permutations in $S_n$ that admit an $m$th root. P. Tur\'{a}n \cite{turan} gave an upper bound when $m$ is a prime number and Blum~\cite{blum} gave an asymptotic formula for the case $m=2$. Recently, M. B\'{o}na, A. McLennan and D. White \cite{bona} proved that the probability that a random permutation of length $n$ has an $m$th root with $m$ prime, is monotonically non-increasing in $n$. See also the work of N. Pouyanne~\cite{pouyanne}  for an asymptotic study for any positive integer $m$ and the work of B. Bollob\'{a}s and B. Pittel \cite{bollobas} who continued the work of N. Pouyanne and studied the limiting distribution of the root degree of a permutation. This problem can be easily reformulated as the problem about the probability, $p_m(n)$, that a $n$-permutation chosen uniformly at random has an $m$th root. For this problem, we give a proof of the fact that when $m$ is a power of a prime $q$, for all $j \geq 0$, $p_m(jq)=p_m(jq+1)=\cdots=p_m(jq+(q-1))$. For the case $m$ a prime, see the paper of M. B\'{o}na, et al. \cite{bona} that includes a combinatorial proof of the equivalent equalities. It is also recommended the paper of A. Mar\'{o}ti \cite{attila} and the bibliography therein for related results about the proportion of $\ell$-regular elements in the symmetric group $S_n$. Another interesting article is due to M. R. Pournaki~\cite{pourn}, who worked in the problem of determining the number of even permutations with roots.

Before stating our main results, we shall give some notation and definitions. As usual, we denote by $\nn$ (respectively $\nn_0$) the set of positive (respectively, non-negative) integers. The {\it cycle type} of an $n$-permutation is a vector ${\bf a}=(a_1,a_2,...,a_n)$ that indicates that the permutation has $a_i$ cycles of length $i$ for every $i \in [n]$. It is an easy exercise to prove that conjugated permutations have the same number of $m$th roots. Let $m, \ell \in \nn$ and $a \in \nn_0$. We define the following sets
\begin{eqnarray*}
 G_m(\ell,a)  := \{ g \ \in \nn : \; g \leq a; \; \gcd\left(g\ell, m\right) = g  \},
\end{eqnarray*}
and
\begin{eqnarray*}
 G_m(\ell):= \{ g \ \in \nn : \; \; \gcd\left(g\ell, m\right) = g  \}.
\end{eqnarray*}
Clearly $G_m(\ell,a) \subseteq G_m(\ell)$, and both are finite sets. Note that if $a=0$, then $G_m(\ell, a)=\emptyset$.  We will name $(g_1,...,g_k)$ the {\it associate vector} of $G_m (\ell, a)$ if  $G_m (\ell, a)=\{g_1, \cdots, g_k\}$ and $g_1 < g_2 < \cdots< g_k$. For every set $G_m(\ell,a)$ of cardinality $k \geq 1$ we define the set of vectors
\begin{eqnarray*}
 \EE_m(\ell,a) := \{ {\bf \varepsilon} \ \in {\nn}_0^k \; : \; {\bf g} \cdot {\bf \varepsilon} = a,\;  \; \text{with {\bf g} the associate vector of}\; G_m(\ell,a) \}.
\end{eqnarray*}
Note that if equation $g_1x_1+\cdots +g_kx_k=a$ does not have non-negative integer solutions then $\EE_m(\ell,a)=\emptyset$. We use the convention that $\displaystyle \sum_{i \in \II} s_i=0$ if $\II$ is empty.

Next, we present our main results about the number of $m$th roots of permutations.
\begin{theorem}\label{the1}
 Let $m$ be a fixed positive integer. Let $\sigma$ be any $n$-permutation of type ${\bf a}$, i.e. for every $\ell \in [n]$, $\sigma$ has $a_\ell$ cycles of length $\ell$. Let $r^{(m)}({\bf a})$ be the number of $m$th roots of $\sigma$, then
\begin{eqnarray*}
 r^{(m)}({\bf a})=\prod_{\substack{\ell \geq 1\\a_\ell \neq 0}} a_\ell ! \left( \sum_{ {\bf \varepsilon} \ \in \EE_m(\ell,a_\ell)} \    \prod_{i=1}^k \frac{\ell^{(g_i - 1)\varepsilon_i}}{ {g_i}^{\varepsilon_i}\varepsilon_i!}   \right),
\end{eqnarray*}
where $k=|G_m(\ell,a_\ell)|$, and ${\bf g}=(g_1,...,g_k)$ is the associate vector of $G_m(\ell,a_\ell)$.
\end{theorem}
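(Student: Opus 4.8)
The plan is to reduce the problem to counting, for each cycle length $\ell$, the number of ways an $m$th power of some permutation $\tau$ can produce exactly $a_\ell$ cycles of length $\ell$ in $\sigma$, and then multiply over all $\ell$. The key structural fact is the standard description of how $m$th powers act on cycles: if $\tau$ contains a cycle of length $d$, then $\tau^m$ splits that cycle into $\gcd(d,m)$ cycles, each of length $d/\gcd(d,m)$. Conversely, a cycle of length $\ell$ in $\sigma$ can arise only by grouping together $g$ cycles of $\sigma$ of length $\ell$ and merging them into a single cycle of $\tau$ of length $g\ell$, where we need $\gcd(g\ell,m)=g$ so that raising to the $m$th power splits it back into $g$ cycles of length $\ell$. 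This is exactly the condition defining $G_m(\ell,a)$, and the constraint that all the length-$\ell$ cycles of $\sigma$ get used up is precisely $\mathbf{g}\cdot\boldsymbol\varepsilon = a_\ell$, i.e. $\boldsymbol\varepsilon \in \EE_m(\ell,a_\ell)$.

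**Key steps.** First I would establish (or cite, since it is classical — e.g. from \cite{wilf} or \cite{annin}) the cycle-splitting lemma for $m$th powers. Second, I would observe that since conjugate permutations have the same number of $m$th roots, and since the cycles of $\tau$ of different lengths contribute independently to $\tau^m$, the count $r^{(m)}(\mathbf a)$ factors as a product over $\ell$ of the number of ways to build, from a disjoint union of cycles of total appropriate sizes, the $a_\ell$ cycles of length $\ell$ of $\sigma$. Third, for fixed $\ell$, I would fix a target set of $a_\ell$ cycles of length $\ell$ (the cycles of $\sigma$ of that length), and count the number of preimage configurations: choose, for each admissible $g \in G_m(\ell,a_\ell)$, a number $\varepsilon_g$ of ``blocks'', partition the $a_\ell$ labeled cycles into these blocks (a block of type $g$ containing $g$ of them), and for each block count the number of length-$g\ell$ cycles of $\tau$ that raise to the $m$th power to give exactly those $g$ cycles in the prescribed way. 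The partitioning of $a_\ell$ labeled objects into $\varepsilon_g$ unordered blocks of size $g$ (over all $g$) contributes the multinomial factor $a_\ell!/\prod_i (g_i!)^{\varepsilon_i}\varepsilon_i!$; and the number of ways to interleave $g$ given $\ell$-cycles into a single $g\ell$-cycle whose $m$th power is the prescribed permutation contributes a factor of $\ell^{g}/g$ per block (there are $\ell^{g-1}$ cyclic interleavings times a factor accounting for which power structure, carefully: $\ell^{g-1}\cdot\ell = \ell^g$ placements but $g$ of them give the same $g\ell$-cycle under the rotation compatible with $\tau^m$, or more precisely one gets $\ell^{g-1}$ once the cyclic redundancy is removed — this bookkeeping is where I must be careful). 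Collecting, the per-$\ell$ factor becomes $a_\ell!\sum_{\boldsymbol\varepsilon}\prod_i \ell^{(g_i-1)\varepsilon_i}/(g_i^{\varepsilon_i}\varepsilon_i!)$ after also absorbing the $1/g_i!$ against part of the interleaving count.

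**Main obstacle.** The delicate point will be the precise enumeration of the local interleavings: given $g$ specified $\ell$-cycles of $\sigma$, how many $(g\ell)$-cycles $\tau_0$ are there with $\tau_0^m$ equal to the product of those $g$ cycles, \emph{in some order consistent with $\tau_0$}? One must fix a convention for which cycle of $\sigma$ is ``first'' to kill the cyclic symmetry, track that $\gcd(g\ell,m)=g$ forces $\tau_0^m$ to be a specific rotation-compatible shuffle, and verify the count is $\ell^{g-1}$ after normalization — equivalently $\ell^g/\ell$ raw placements modulo the length-$\ell$ rotational ambiguity of the chosen base cycle. I expect the combinatorial heart of the proof to be exactly this computation and the verification that the redundancies combine to give the clean exponent $(g_i-1)\varepsilon_i$ on $\ell$ and the denominator $g_i^{\varepsilon_i}\varepsilon_i!$. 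Once the per-block count is pinned down, assembling the product over $\ell$ and the sum over $\boldsymbol\varepsilon\in\EE_m(\ell,a_\ell)$ is routine, and the empty-set/empty-sum conventions handle the degenerate cases where $\sigma$ has no $m$th root (some $\EE_m(\ell,a_\ell)=\emptyset$, making the product zero).
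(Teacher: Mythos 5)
Your skeleton is the same as the paper's: work with each cycle length $\ell$ separately, index the admissible merge sizes by $G_m(\ell,a_\ell)$ and the grouping patterns by $\EE_m(\ell,a_\ell)$, count set partitions of the $a_\ell$ labeled $\ell$-cycles into unordered blocks (giving $a_\ell!/\prod_i (g_i!)^{\varepsilon_i}\varepsilon_i!$), multiply by a per-block merging count, sum over $\EE_m(\ell,a_\ell)$, and take the product over $\ell$. But the one ingredient you explicitly leave open is exactly the combinatorial heart of the argument, and as stated your bookkeeping for it is not yet correct: you offer $\ell^{g}/g$ as the per-block factor, which is not even an integer in general (for $\ell=3$, $g=2$, $m=2$ it would be $9/2$, while the true number of $6$-cycles $D$ with $D^2=C_1C_2$ for two given $3$-cycles is $3$). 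The count you need, for a given unordered block of $g$ specified $\ell$-cycles, is $(g-1)!\,\ell^{g-1}$ cycles $D$ of length $g\ell$ with $D^m$ equal to their product; only with this value does the cancellation $(g-1)!/g!=1/g$ produce the factor $\ell^{(g_i-1)\varepsilon_i}/(g_i^{\varepsilon_i}\varepsilon_i!)$ in the theorem. Your hedged alternative ``$\ell^{g-1}$ after removing cyclic redundancy'' drops the $(g-1)!$ that accounts for the order in which the $g$ cycles are interleaved, so as written neither tentative value closes the computation.

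The paper resolves precisely this point in Proposition~\ref{npc}: for a permutation of Type $(\ell)^{gp}$ the number of $m$th roots of Type $(g\ell)^p$ is $(gp)!\,\ell^{p(g-1)}/(g^p p!)$, obtained by combining the unordered-partition count $(gp)!/((g!)^p p!)$ with the per-bundle count $(g-1)!\,\ell^{g-1}$, the latter coming from the explicit merging procedure in the proof of Theorem 4.8.2 of \cite{wilf} (one fixes a starting element of the prospective $g\ell$-cycle, chooses an order of the remaining $g-1$ cycles, and chooses one of $\ell$ starting points in each, and the condition $\gcd(g\ell,m)=g$ guarantees the resulting cycle's $m$th power is the prescribed product). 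To complete your proof you must actually establish this per-bundle count (or cite it), verify that every root of $\sigma_\ell$ arises exactly once from one partition, one assignment of block sizes recorded by some $\varepsilon\in\EE_m(\ell,a_\ell)$, and one interleaving per block, and note that cycles of different lengths $g_i\ell$ and different $\ell$ are disjoint so the counts multiply. Once that lemma is in place, the rest of your assembly, including the empty-sum convention handling permutations with no $m$th root, matches the paper's proof.
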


Our next theorem provides a generating function for the number of $m$th roots of permutations.
\begin{theorem}\label{the2} Let $m,n$ be positive integers, and $a_1,...,a_n$ be non-negative integers. For $n=a_1+2a_2+\cdots +na_n$, the coefficient of $\frac{t_1^{a_1} \cdots t_{n}^{a_n}}{a_1! \cdots a_n!}$ in the expansion of
\begin{equation*}
\exp\Big(\sum_{\ell\geq1}\sum_{g \in G_m(\ell)}\frac{\ell^{g-1}}{g}t_\ell^g\Big),
\end{equation*}
is the number of $m$th roots of an $n$-permutation of cycle type ${\bf a}=(a_1,...,a_n)$.
\end{theorem}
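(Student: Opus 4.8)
\section*{Proof proposal for Theorem~\ref{the2}}

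The plan is to deduce Theorem~\ref{the2} directly from Theorem~\ref{the1} by expanding the exponential and extracting the relevant coefficient. First I would use the multiplicativity of $\exp$ to write
\begin{equation*}
\exp\Big(\sum_{\ell\geq1}\sum_{g \in G_m(\ell)}\frac{\ell^{g-1}}{g}t_\ell^g\Big)
=\prod_{\ell\geq1}\ \prod_{g\in G_m(\ell)}\exp\Big(\frac{\ell^{g-1}}{g}t_\ell^g\Big).
\end{equation*}
Since the monomial $t_1^{a_1}\cdots t_n^{a_n}$ involves only $t_1,\dots,t_n$, only the factors with $\ell\le n$ contribute to its coefficient, and (because $\gcd(g\ell,m)=g$ forces $g\mid m$) each $G_m(\ell)$ is finite; hence for the purpose of this extraction the right-hand side is an honest finite product of power series, and, as the $\ell$-th inner product involves only the variable $t_\ell$, the coefficient of $t_1^{a_1}\cdots t_n^{a_n}$ factors as the product over $\ell$ of the coefficient $c_\ell$ of $t_\ell^{a_\ell}$ in $\prod_{g\in G_m(\ell)}\exp\big(\ell^{g-1}t_\ell^g/g\big)$. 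Multiplying by $a_1!\cdots a_n!$ then gives the sought coefficient of $\dfrac{t_1^{a_1}\cdots t_n^{a_n}}{a_1!\cdots a_n!}$ as $\prod_{\ell\ge1}a_\ell!\,c_\ell$.

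Next I would compute $c_\ell$ for a fixed $\ell$. Writing $\exp\big(\ell^{g-1}t_\ell^g/g\big)=\sum_{\varepsilon_g\ge0}\frac{1}{\varepsilon_g!}\big(\frac{\ell^{g-1}}{g}\big)^{\varepsilon_g}t_\ell^{g\varepsilon_g}$ and multiplying over $g\in G_m(\ell)$, the coefficient of $t_\ell^{a_\ell}$ is the sum, over all tuples $(\varepsilon_g)_{g\in G_m(\ell)}$ of non-negative integers with $\sum_g g\varepsilon_g=a_\ell$, of $\prod_g\frac{\ell^{(g-1)\varepsilon_g}}{g^{\varepsilon_g}\varepsilon_g!}$. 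In any such tuple necessarily $\varepsilon_g=0$ whenever $g>a_\ell$, so the indices that actually occur range over $G_m(\ell,a_\ell)=\{g\in G_m(\ell):g\le a_\ell\}$; if $(g_1,\dots,g_k)$ is its associate vector, the constraint $\sum_i g_i\varepsilon_i=a_\ell$ says precisely that $\varepsilon=(\varepsilon_1,\dots,\varepsilon_k)\in\EE_m(\ell,a_\ell)$. Hence
\begin{equation*}
c_\ell=\sum_{\varepsilon\in\EE_m(\ell,a_\ell)}\ \prod_{i=1}^{k}\frac{\ell^{(g_i-1)\varepsilon_i}}{g_i^{\varepsilon_i}\varepsilon_i!},\qquad k=|G_m(\ell,a_\ell)|.
\end{equation*}
When $a_\ell=0$ this factor, together with $a_\ell!=1$, contributes $1$, which matches the fact that in Theorem~\ref{the1} the product is taken only over $\ell$ with $a_\ell\neq0$.

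Finally I would assemble the pieces: the coefficient of $\frac{t_1^{a_1}\cdots t_n^{a_n}}{a_1!\cdots a_n!}$ equals $\prod_{\ell\ge1}a_\ell!\,c_\ell=\prod_{\ell\ge1,\,a_\ell\neq0}a_\ell!\Big(\sum_{\varepsilon\in\EE_m(\ell,a_\ell)}\prod_{i=1}^{k}\frac{\ell^{(g_i-1)\varepsilon_i}}{g_i^{\varepsilon_i}\varepsilon_i!}\Big)$, which is exactly $r^{(m)}(\mathbf a)$ by Theorem~\ref{the1}; and since $n=a_1+2a_2+\cdots+na_n$ guarantees that $\mathbf a$ is the cycle type of some $n$-permutation, this number is the number of $m$th roots of such a permutation, as claimed. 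I do not expect a serious obstacle: the argument is essentially a formal-power-series repackaging of Theorem~\ref{the1}. The only points requiring care are the bookkeeping distinguishing $G_m(\ell)$ from $G_m(\ell,a_\ell)$ (i.e.\ that terms with $g>a_\ell$ cannot occur), the finiteness of each $G_m(\ell)$ that legitimises the manipulations, and the degenerate case $a_\ell=0$. An alternative, more self-contained route would derive Theorem~\ref{the2} directly from the cycle structure of $m$th roots via the exponential formula for exponential generating functions, but reducing to Theorem~\ref{the1} is shorter.
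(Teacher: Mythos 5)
Your proposal is correct and follows essentially the same route as the paper: expand the exponential as a product of factors $\exp\big(\ell^{g-1}t_\ell^g/g\big)$, extract the coefficient of $t_1^{a_1}\cdots t_n^{a_n}/(a_1!\cdots a_n!)$ one variable at a time, and identify the resulting sum over solutions of $\sum_i g_i\varepsilon_i=a_\ell$ with the formula of Theorem~\ref{the1}. Your explicit remark that terms with $g>a_\ell$ force $\varepsilon_g=0$, so the sum over $G_m(\ell)$ collapses to one over $G_m(\ell,a_\ell)$, is a point the paper treats only implicitly, but the argument is the same.
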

 Note that Theorems~\ref{the1} and~\ref{the2} allow us to compute the number of $m$th roots of a permutation in an effective way using a computer algebra system. 
 
The outline of the paper is as follows. In Section~\ref{sec2} we present some preliminaries results about the sets $G_m(\ell, a)$ and $G_m(\ell)$. In Section~\ref{sec3} we recall how to extract roots of permutations. In Section~\ref{sec4} we give the proofs of Theorem~\ref{the1} and Theorem~\ref{the2}. Finally, in Section~\ref{sec6} we present a proof of the fact that probability $p_m(n)$ satisfies $p_m(jq)=p_m(jq+1)=\cdots =p_m(jq+(q-1))$, where $j=1,...$ and $m$ is a power of a prime number $q$.

\section{Preliminaries}\label{sec2}
The next notation is standard, and it can be found in several books, for example, in the book of V. Schoup (\cite{schoup}, Section 1.3). For integer $n$ and prime $p$ we will write $\nu_p(n)$ for the highest power of $p$ that divides $n$. In this notation the definition of $\gcd(a,b)$ is
$$
\gcd(a,b)=\prod_{p \; \in \mathcal{P}} p^{\text{min}(\nu_p(a),\nu_p(b))},
$$
where $\mathcal{P}$ is the set of all primes. We use the convention that $\gcd(g)=g$ for every $g\in \nn$. If $a$ and $b$ are positive integers then
$$
\nu_p(\gcd(a,b))=\text{min}(\nu_p(a),\nu_p(b)),
$$
and
$$
\nu_p(a\cdot b)=\nu_p(a)+\nu_p(b).
$$
Note that $a$ divides $b$ if and only if $\nu_p(a) \leq \nu_p(b)$ for all primes $p$. The following definition can be found in the book of H. Wilf (\cite{wilf}, page 148). For a pair $\ell$, $m$ in $\nn$, the number $((\ell,m))$ is defined as
$$
((\ell, m))=\prod_{\substack{p ~|~\ell \\ p \; \in \mathcal{P} }}p^{\nu_p(m)}.
$$
The number $((\ell,m))$ is very important in the characterization of the permutations that admit $m$th roots (Theorem~\ref{root}, Section~\ref{sec3}). As the sets $G_m(\ell, a)$,  and $G_m(\ell)$ play an important role in our expressions, we first prove some interesting propositions about the elements in these sets. Some of these propositions show the relation between the  elements of  $G_m(\ell, a)$ and $G_m(\ell)$ with the number $((\ell,m))$.

\begin{proposition}\label{gequal1}
If $a \geq1$ then $1 \in G_m(\ell, a)$ if and only if $\gcd(\ell, m)=1$.
 \end{proposition}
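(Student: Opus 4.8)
The plan is to prove this directly from the definition of $G_m(\ell,a)$, with essentially no machinery. Recall that $G_m(\ell,a)=\{g\in\nn:\ g\le a;\ \gcd(g\ell,m)=g\}$, so membership of a specific integer in this set is just a matter of checking the two defining conditions.

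First I would specialize to $g=1$. The condition $g\le a$ becomes $1\le a$, which holds by hypothesis since $a\ge 1$; thus this part of the definition imposes no restriction. The remaining condition $\gcd(g\ell,m)=g$ becomes $\gcd(\ell,m)=1$, using the convention $\gcd(g)=g$ only in the degenerate product case (here we simply have $\gcd(1\cdot\ell,m)=\gcd(\ell,m)$). Therefore $1\in G_m(\ell,a)$ holds precisely when $\gcd(\ell,m)=1$, which is exactly the claimed equivalence. For a reader who prefers the $\nu_p$ language introduced in Section~\ref{sec2}, one can phrase $\gcd(\ell,m)=1$ as $\min(\nu_p(\ell),\nu_p(m))=0$ for every prime $p$, but this is not needed.

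There is no genuine obstacle in this proposition; it is a reformulation of the definition. The only thing to be mildly careful about is to point out explicitly why the size constraint $g\le a$ is automatically satisfied for $g=1$ under the hypothesis $a\ge 1$ (and to recall that $G_m(\ell,0)=\emptyset$, so the hypothesis $a\ge1$ cannot be dropped). The proof is therefore a single short paragraph.
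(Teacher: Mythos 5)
Your proof is correct and follows essentially the same route as the paper: both simply check the two defining conditions of $G_m(\ell,a)$ for $g=1$, noting that $\gcd(1\cdot\ell,m)=\gcd(\ell,m)$ and that $1\le a$ holds by hypothesis. Your version is slightly more explicit about the role of the size constraint $g\le a$, but there is no substantive difference.
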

 \begin{proof}
   Because $1=\gcd(\ell,m)=\gcd(1 \cdot \ell, m)$, we have $1 \in  G_m(\ell, a)$. The converse follows from the definition of $G_m(\ell, a)$.
 \end{proof}
 
 \begin{proposition}\label{typeg2} Let $g$, $\ell$ and $m$ be positive integers. Then $g \in G_m(\ell)$ if and only if conditions 1 and 2 hold
 \begin{enumerate}
 \item Any prime divisor $p$ of $g$ divides $m$ and satisfies one of the following:\label{c1}
\begin{enumerate}
\item If $p$ divides $\ell$, then $\nu_p(g)=\nu_p(m)$, \label{p2}
\item If $p$ does not divide $\ell$, then $\nu_p(g)\leq \nu_p(m)$. \label{p1}
\end{enumerate}
\item If $p$ is a prime that does not divide $g$, then $p$ does not divide $\gcd(\ell, m)$.\label{p3}
\end{enumerate}
\end{proposition}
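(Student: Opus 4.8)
The plan is to unwind the definition $g \in G_m(\ell) \iff \gcd(g\ell, m) = g$ prime by prime, using the standard valuation identities recalled just above: for a prime $p$ we have $\nu_p(\gcd(g\ell,m)) = \min(\nu_p(g)+\nu_p(\ell), \nu_p(m))$, and $g \in G_m(\ell)$ holds exactly when $\min(\nu_p(g)+\nu_p(\ell),\nu_p(m)) = \nu_p(g)$ for every prime $p$. So I would fix a prime $p$, write $x = \nu_p(g)$, $y = \nu_p(\ell)$, $z = \nu_p(m)$, and analyze the single equation $\min(x+y,z) = x$ according to whether $p \mid g$ (i.e. $x \geq 1$) or not, and whether $p \mid \ell$ (i.e. $y \geq 1$) or not.

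\begin{proof}
By the valuation criterion for divisibility recalled above, $g \in G_m(\ell)$ if and only if $\gcd(g\ell,m)=g$, which in turn holds if and only if
$$
\min\big(\nu_p(g)+\nu_p(\ell),\ \nu_p(m)\big)=\nu_p(g)\qquad\text{for every prime }p.
$$
Fix a prime $p$ and set $x=\nu_p(g)$, $y=\nu_p(\ell)$, $z=\nu_p(m)$; the displayed condition reads $\min(x+y,z)=x$. We show this single-prime condition is equivalent to: \emph{either} ($x\geq 1$, i.e.\ $p\mid g$) \emph{and} the clause in item~\ref{c1} corresponding to whether $p\mid\ell$, \emph{or} ($x=0$, i.e.\ $p\nmid g$) \emph{and} $\min(y,z)=0$, which is the clause in item~\ref{p3}.

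\textbf{Case $p\mid g$, so $x\geq 1$.} Then $\min(x+y,z)=x$ forces $z\geq x\geq 1$, so $p\mid m$, which is the first assertion of item~\ref{c1}. If moreover $p\mid\ell$ then $y\geq 1$, so $x+y>x$, and $\min(x+y,z)=x$ forces $z\leq x+y$ and in fact $z=x$ (since $z\geq x$); thus $\nu_p(g)=\nu_p(m)$, giving~\ref{p2}. If $p\nmid\ell$ then $y=0$, so $\min(x,z)=x$ is equivalent to $x\leq z$, i.e.\ $\nu_p(g)\leq\nu_p(m)$, giving~\ref{p1}. Conversely, in either subcase the stated condition on $x,z$ (with $y\geq1$ or $y=0$ respectively) immediately yields $\min(x+y,z)=x$.

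\textbf{Case $p\nmid g$, so $x=0$.} Then $\min(y,z)=0$, which says $p$ does not divide $\gcd(\ell,m)$, i.e.\ item~\ref{p3}; and conversely $\min(y,z)=0=x$ is exactly the single-prime condition. Since every prime either divides $g$ or not, conjoining the single-prime equivalences over all $p$ yields precisely the conjunction of conditions 1 and 2, completing the proof.
\end{proof}

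The only mild obstacle is bookkeeping: making sure the ``first assertion of item~\ref{c1}'' (every prime divisor of $g$ divides $m$) is not lost—it is recovered for free in the case $p\mid g$ because $\min(x+y,z)=x\geq1$ forces $z\geq1$—and making sure the quantifier ``for every prime $p$'' is threaded correctly so that the per-prime equivalences assemble into the claimed biconditional. No genuine difficulty is expected; it is a direct translation through the $\nu_p$ calculus.
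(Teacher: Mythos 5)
Your proof is correct and follows essentially the same route as the paper: both arguments reduce $\gcd(g\ell,m)=g$ to the identity $\nu_p(\gcd(g\ell,m))=\min(\nu_p(g)+\nu_p(\ell),\nu_p(m))$ and then split into cases according to whether $p$ divides $g$ and whether $p$ divides $\ell$. The only difference is organizational: you establish a single per-prime biconditional and conjoin over all primes, whereas the paper proves the two implications separately (showing $g\mid\gcd(g\ell,m)$ and $\gcd(g\ell,m)\mid g$ for the ``if'' direction), but the underlying valuation computations are identical.
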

\begin{proof}
For the if part: As any prime divisor $p$ of $g$ satisfies (\ref{c1})  $\nu_p(g) \leq \nu_p(m)$, and therefore $g~|~m$ and $g~|~\gcd(g\ell, m)$. Now we prove that $\gcd(g \ell, m)~|~g$. Let $p$ be any prime divisor of $\gcd(g \ell, m)$, then $p~|~g\ell$ and $p~|~m$. If $p \nmid g$ then $p~|~\ell$ and $p~|~\gcd(\ell, m)$, which implies a contradiction of condition (\ref{p3}), so $p~|~g$.
If $p$ divides $\ell$ then by (\ref{p2}), $\nu_p(g)=\nu_p(m)$. Thus $$\nu_p(\gcd(g\ell,m))=\text{min}(\nu_p(g\ell), \nu_p(g))=\nu_p(g).$$
If $p$ does not divide $\ell$, $\nu_p(\ell)=0$, and by property (\ref{p1}) $\nu_p(g)\leq \nu_p(m)$. Then $$\nu_p(\gcd(g\ell,m))=\text{min}(\nu_p(g)+\nu_p(\ell), \nu_p(m))=\nu_p(g).$$
Therefore $\gcd(g\ell,m)~|~g$, and $g \in G_m(\ell)$. \\
For the converse. Let $p$ be any prime divisor of $g \in G_m(\ell)$. By definition $g=\gcd(g\ell,m)$ and then $p~|~m$. We first prove (\ref{p2}). The hypothesis  $p~|~\ell$ implies $\nu_p(\ell) > 0$, and  $\nu_p(g)=\text{min}(\nu_p(g)+\nu_p(\ell),\nu_p(m))=\nu_p(m)$. Now we prove (\ref{p1}). As $p \nmid \ell$, $\nu_p(\ell)=0$ and $\nu_p(g)=\text{min}(\nu_p(g)+\nu_p(\ell),\nu_p(m))$, and therefore $\nu_p(g) \leq \nu_p(m)$. Finally, we prove (\ref{p3}), Suppose that there exists a prime $p$ such that $p \nmid~g$ and $p~|~\gcd(\ell,m)$. Then $p~|~\gcd(g \ell, m)=g$, which is clearly a contradiction.
\end{proof}
\begin{remark}
There is another way to build the set $G_m(\ell)$: choose any divisor $d \geq 1$ of $m$, with $d$ relatively prime to $\ell$, and define $g:=m/d$. Thus, we have that $\gcd(\ell g, m)=\gcd(\ell g,gd)=g$, as it is required. Then, the set  $G_m(\ell)$ can be defined as the set $\{g=m/d\;  : \; d \in \nn, d~|~m \text{ and} \gcd(d, \ell)=1 \}$. It is an easy exercise to show that both sets are the same. To build the set $G_m(\ell,a)$ follow the same procedure only with the condition that  $g=m/d \leq a$.

\end{remark}\label{re0}
The following proposition shows that $((\ell,m))$ is an element of $G_m(\ell)$  
\begin{proposition}\label{pr2}
Let $\ell$, $m$, $a$ be positive integers. Then
\begin{enumerate}
\item $((\ell, m))$  belongs to  $G_m(\ell)$,\label{pr21}
\item if $((\ell, m))$ divides $a$, then $((\ell,m)) \in G_m(\ell, a)$.\label{pr22}
\end{enumerate}
\end{proposition}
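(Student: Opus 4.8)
The plan is to verify that $g := ((\ell,m))$ satisfies conditions 1 and 2 of Proposition~\ref{typeg2}, which immediately gives part~\ref{pr21}; part~\ref{pr22} then follows by combining this with the hypothesis $((\ell,m))\mid a$ and the definition of $G_m(\ell,a)$. So the work is entirely in checking the two conditions against the explicit formula $((\ell,m))=\prod_{p\mid\ell}p^{\nu_p(m)}$, where the product runs over primes dividing $\ell$.

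First I would record the $p$-adic valuations of $g=((\ell,m))$: for a prime $p$ we have $\nu_p(g)=\nu_p(m)$ when $p\mid\ell$, and $\nu_p(g)=0$ when $p\nmid\ell$. With this in hand, condition~1 is essentially immediate. Any prime divisor $p$ of $g$ must satisfy $\nu_p(g)>0$, hence $p\mid\ell$ (by the valuation formula) and $\nu_p(g)=\nu_p(m)$, so in particular $p\mid m$ and case~(\ref{p2}) holds; case~(\ref{p1}) is vacuous since no prime divisor of $g$ fails to divide $\ell$. For condition~2, suppose $p$ is a prime not dividing $g$. If also $p\mid\ell$, then $\nu_p(g)=\nu_p(m)$, and $\nu_p(g)=0$ forces $\nu_p(m)=0$, so $p\nmid m$ and hence $p\nmid\gcd(\ell,m)$. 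If instead $p\nmid\ell$, then trivially $p\nmid\gcd(\ell,m)$. Either way condition~2 holds, so Proposition~\ref{typeg2} yields $((\ell,m))\in G_m(\ell)$, proving part~\ref{pr21}.

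For part~\ref{pr22}, I would simply note that $G_m(\ell,a)=\{g\in G_m(\ell): g\le a\}$ (this is immediate from the two definitions, since $\gcd(g\ell,m)=g$ is the defining condition of both sets, with $G_m(\ell,a)$ adding only $g\le a$). Since $((\ell,m))\mid a$ and $a\ge 1$, we get $((\ell,m))\le a$; combined with part~\ref{pr21} this gives $((\ell,m))\in G_m(\ell,a)$.

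I do not anticipate any real obstacle here: the statement is a direct consequence of Proposition~\ref{typeg2} together with bookkeeping of valuations. The only point requiring a little care is making sure the vacuous case~(\ref{p1}) and the two sub-cases of condition~2 are all addressed cleanly, so that the appeal to Proposition~\ref{typeg2} is fully justified; alternatively, one could bypass Proposition~\ref{typeg2} entirely and check $\gcd(((\ell,m))\cdot\ell,\,m)=((\ell,m))$ directly prime-by-prime, which is arguably even shorter.
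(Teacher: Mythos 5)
Your proposal is correct and follows essentially the same route as the paper: both verify that $((\ell,m))$ satisfies conditions 1 and 2 of Proposition~\ref{typeg2} via the $p$-adic valuations $\nu_p(((\ell,m)))$, and then deduce part~\ref{pr22} from $((\ell,m))\mid a \Rightarrow ((\ell,m))\le a$. The only cosmetic difference is that the paper splits off the case $\gcd(\ell,m)=1$ (citing Proposition~\ref{gequal1}) while you treat all cases uniformly, which is fine.
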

\begin{proof}
 First we show~\ref{pr21}. The case $\gcd(\ell,m)=1$ follows from Proposition~\ref{gequal1}. If $\gcd(\ell,m) > 1$, for any prime divisor $p$ of $((\ell, m))$, we have that $p$ divides $\gcd(\ell, m)$ and $p$ divides $\ell$ and $m$. By definition of $((\ell, m))$, $\nu_p\big(((\ell, m))\big)=\nu_p(m)$, and condition (1) in Proposition~\ref{typeg2} holds. Let $q$ be a prime that does not divide $((\ell, m))$, if $q \nmid \ell$ clearly $q \nmid \gcd(\ell, m)$ and if $q \mid \ell$, use the hypothesis that $ q \nmid ((\ell, m))$ and the definition of $((\ell, m))$ to obtain $\nu_q(m)=0$. Therefore, condition (2) in Proposition~\ref{typeg2} also holds, and $((\ell, m)) \in G_m(\ell)$. Now we show the second part. As a consequence of the first part of this proposition, $((\ell, m)) \in G_m(\ell)$. As $((\ell, m))$ divides $a$, $((\ell, m)) \leq a$, and therefore $((\ell, m)) \in G_m(\ell, a)$.\\
\end{proof}

\begin{remark}\label{re1}
Note that if $k=\gcd(k\ell,m)$ then $\gcd(\ell,m)$ divides $k$.
\end{remark}

\begin{proposition}\label{propc}
Let $\ell, m$ be positive integers. If  $G_m(\ell)=\{g_1,...,g_h\}$, then $\gcd(g_1,...,g_h)=((\ell,m))$. \label{part0}

\end{proposition}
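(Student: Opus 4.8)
The plan is to show the two divisibility directions separately, using Proposition~\ref{typeg2} to control which primes can appear in elements of $G_m(\ell)$ and at what multiplicity. Write $d := ((\ell,m))$. By Proposition~\ref{pr2}\ref{pr21} we know $d \in G_m(\ell)$, so $d$ is one of the $g_i$; hence $\gcd(g_1,\dots,g_h)$ divides $d$ automatically. The real content is the reverse: $d$ divides every element of $G_m(\ell)$, i.e. $d = \gcd(g_1,\dots,g_h)$ follows once we know $d \mid g_i$ for all $i$.

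First I would fix an arbitrary $g \in G_m(\ell)$ and a prime $p$ with $p \mid d = ((\ell,m))$; the goal is $\nu_p(d) \le \nu_p(g)$. By the definition of $((\ell,m))$, $p \mid \ell$ and $\nu_p(d) = \nu_p(m)$. Now I split on whether $p \mid g$. If $p \mid g$, then since $g \in G_m(\ell)$ and $p \mid \ell$, condition~\ref{p2} of Proposition~\ref{typeg2} gives $\nu_p(g) = \nu_p(m) = \nu_p(d)$, so in particular $\nu_p(d) \le \nu_p(g)$. If $p \nmid g$, then condition~\ref{p3} of Proposition~\ref{typeg2} says $p \nmid \gcd(\ell,m)$; but $p \mid \ell$ and $p \mid m$ (the latter because $\nu_p(m) = \nu_p(d) \ge 1$), so $p \mid \gcd(\ell,m)$, a contradiction. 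Hence this case cannot occur, and we always have $\nu_p(d) \le \nu_p(g)$.

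Conversely, to rule out extra prime factors of the gcd beyond those in $d$, take a prime $p$ with $p \nmid d$; I must show $p \nmid g_i$ for at least one $i$ — but in fact $d \in G_m(\ell)$ and $p \nmid d$ already witnesses this, so $p \nmid \gcd(g_1,\dots,g_h)$. Combining: for every prime $p$, $\nu_p(\gcd(g_1,\dots,g_h)) \ge \nu_p(d)$ from the first part (since $\nu_p(d) \le \nu_p(g_i)$ for all $i$), while $\nu_p(\gcd(g_1,\dots,g_h)) \le \nu_p(d)$ because $d$ itself is among the $g_i$. Therefore $\gcd(g_1,\dots,g_h) = d = ((\ell,m))$.

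The only mild subtlety — and the one step I would be careful to state explicitly — is handling the case $\gcd(\ell,m) = 1$, where $d = 1$: then Proposition~\ref{gequal1} gives $1 \in G_m(\ell)$ directly, so $\gcd(g_1,\dots,g_h) = 1 = d$ trivially. I do not expect any genuine obstacle here; the argument is a bookkeeping exercise in $p$-adic valuations once Propositions~\ref{typeg2} and~\ref{pr2} are in hand.
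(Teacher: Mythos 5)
Your proof is correct and follows essentially the same route as the paper: both directions are obtained from Proposition~\ref{pr2}, part~\ref{pr21} (so $((\ell,m))$ is among the $g_i$) together with the valuation conditions of Proposition~\ref{typeg2}. The only cosmetic difference is that where you rule out $p\nmid g$ by contradiction via condition~\ref{p3} of Proposition~\ref{typeg2}, the paper instead invokes Remark~\ref{re1} to get $\gcd(\ell,m)\mid g$ directly; the content is the same.
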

\begin{proof}
We begin with the case $\gcd(g_1,...,g_h)=1$. As a consequence of  Remark~\ref{re1}, if $g_i=\gcd(g_i\ell,m)$ then $\gcd(\ell,m)$ divides $g_i$ for any $i \in [h]$, and therefore we have $\gcd(\ell,m)=1$ which implies that $((\ell, m))=1$. Now, for $d:=\gcd(g_1,...,g_h) > 1$, from part (\ref{pr21}) of Proposition~\ref{pr2}, $((\ell, m)) \in G_m(\ell)$, and therefore $d$ divides $((\ell, m))$. Now we prove that  $((\ell, m))$ divides $d$. As $d > 1$ and $d~|~((\ell, m))$ then $((\ell, m)) > 1$. Let $p$ be any prime factor of $((\ell, m))$. From the  definition of $((\ell,m))$, $p$ divides $\ell$ and $m$, thus $p$ divides $\gcd(\ell, m)$. Let $g$ be any element in $G_m(\ell)$. So $g=\gcd(g\ell,m)$. By Remark 1, we know that $\gcd(\ell, m)$ divides $\gcd(g\ell, m)$ and therefore $p$ divides $g$. By Proposition~\ref{typeg2} (\ref{p2}), the exponent of $p$ in any $g \in G_m(\ell)$ is $\nu_p(m)$, then $\nu_p(d)=\nu_p(m)$. On the other hand, by definition $\nu_p\big(((\ell, m))\big)=\nu_p(m)$, and therefore $\nu_p\big(((\ell, m))\big)=\nu_p(d)$, so we conclude that $((\ell, m))$ divides $d$.
\end{proof}
A consequence of this proposition is that $((\ell, m))$ is the least element of the set $G_m(\ell)$. We also obtain the following corollary
\begin{corollary}\label{coro1}
Let $a, \ell, m$ be positive integers. Let $G_m(\ell , a)=\{g_1,...,g_j\}$. Then 
$$\gcd(g_1,...,g_j)=((\ell,m)).$$
\end{corollary}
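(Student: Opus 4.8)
The plan is to deduce this directly from Proposition~\ref{propc} together with the remark, recorded just after it, that $((\ell,m))$ is the least element of $G_m(\ell)$. First I would isolate the two elementary facts about greatest common divisors that do all the work: (i) if $A \subseteq B$ are finite nonempty subsets of $\nn$, then $\gcd(B)$ divides $\gcd(A)$ (because $\gcd(B)$ is a common divisor of every element of $A$); and (ii) $\gcd(A)$ divides every element of $A$.

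Since the statement writes $G_m(\ell,a)=\{g_1,\dots,g_j\}$, I may assume this set is nonempty. As $G_m(\ell,a)\subseteq G_m(\ell)$, fact (i) and Proposition~\ref{propc} give $((\ell,m))=\gcd(G_m(\ell))$ divides $\gcd(g_1,\dots,g_j)$. For the reverse divisibility I would first show $((\ell,m))\in G_m(\ell,a)$: pick any $g_i\in G_m(\ell,a)$; then $g_i\in G_m(\ell)$ and $g_i\le a$, while $((\ell,m))$ is the least element of $G_m(\ell)$, so $((\ell,m))\le g_i\le a$; combining this with $((\ell,m))\in G_m(\ell)$ from Proposition~\ref{pr2}(\ref{pr21}), the defining conditions of $G_m(\ell,a)$ are met, so $((\ell,m))\in G_m(\ell,a)$. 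Then fact (ii) yields $\gcd(g_1,\dots,g_j)$ divides $((\ell,m))$. Since two positive integers that divide one another are equal, $\gcd(g_1,\dots,g_j)=((\ell,m))$.

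There is no genuine obstacle here; the proof is a two-line combination of Proposition~\ref{propc} and Proposition~\ref{pr2}. The only point requiring a little care is that one must not appeal to the sufficient condition ``$((\ell,m))\mid a$'' from Proposition~\ref{pr2}(\ref{pr22}), which need not hold for an arbitrary positive integer $a$. Instead, one uses that the weaker inequality $((\ell,m))\le a$ holds automatically as soon as $G_m(\ell,a)$ is nonempty, precisely because every element of $G_m(\ell,a)$ lies between $((\ell,m))$ and $a$.
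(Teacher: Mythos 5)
Your proof is correct and follows exactly the route the paper intends: Corollary~\ref{coro1} is stated there without a written proof, as an immediate consequence of Proposition~\ref{propc} together with the remark that $((\ell,m))$ is the least element of $G_m(\ell)$, and your argument simply spells out that deduction, including the one point that needs care, namely that $((\ell,m))\in G_m(\ell,a)$ as soon as $G_m(\ell,a)$ is nonempty (so that $\gcd(g_1,\dots,g_j)$ divides $((\ell,m))$), with the reverse divisibility coming from $G_m(\ell,a)\subseteq G_m(\ell)$. Nothing is missing.
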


 \begin{proposition}\label{props}
    Let $m, \ell, a$ be positive integers. Let $G_m(\ell, a)=\{g_1,...,g_h\}$. Then $((\ell,m))$ divides $a$ if and only if
    \begin{equation}\label{leq}
     g_1x_1+g_2x_2+ \cdots +g_hx_h=a,
     \end{equation} has non-negative integer solutions.
     \end{proposition}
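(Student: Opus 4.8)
The plan is to obtain both implications as immediate consequences of the facts already established about $((\ell,m))$; write $d := ((\ell,m))$ throughout. First I would dispose of the degenerate situation: if $G_m(\ell,a)=\emptyset$ then, since $d$ is the least element of $G_m(\ell)$ (a consequence of Proposition~\ref{propc}), we must have $a<d$, so $d\nmid a$, while \eqref{leq} reads $0=a$ and has no solution by the empty-sum convention; both sides of the claimed equivalence are then false, and there is nothing to prove. So I may assume $G_m(\ell,a)=\{g_1,\dots,g_h\}$ with $h\geq 1$.

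For the ``only if'' direction, assume $d\mid a$. By part~(\ref{pr22}) of Proposition~\ref{pr2}, $d\in G_m(\ell,a)$, hence $d=g_i$ for some index $i\in[h]$. Then setting $x_i:=a/d\in\nn_0$ and $x_j:=0$ for all $j\neq i$ produces a non-negative integer solution of \eqref{leq}.

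For the ``if'' direction, suppose $(x_1,\dots,x_h)\in\nn_0^h$ solves \eqref{leq}. By Corollary~\ref{coro1}, $d=\gcd(g_1,\dots,g_h)$, so $d$ divides every $g_i$ and therefore divides the combination $g_1x_1+\cdots+g_hx_h$, which equals $a$; hence $d\mid a$.

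I expect no genuine obstacle here, since all the substance is already packaged in the preliminary propositions. The only point that needs care is the ``only if'' direction: it is not enough to know that $((\ell,m))$ divides $a$, one also needs $((\ell,m))$ to occur as one of the $g_i$'s inside the \emph{truncated} set $G_m(\ell,a)$, and that is exactly what Proposition~\ref{pr2}~(\ref{pr22}) provides. Thus the proof reduces to invoking the appropriate earlier statement in each direction.
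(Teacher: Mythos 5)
Your proof is correct and follows essentially the same route as the paper: one direction by noting $((\ell,m))=\gcd(g_1,\dots,g_h)$ divides any non-negative combination (Corollary~\ref{coro1}), the other by using Proposition~\ref{pr2}~(\ref{pr22}) to place $((\ell,m))$ inside $G_m(\ell,a)$ and taking $x_i=a/((\ell,m))$ with the remaining variables zero. The only differences are cosmetic: you treat the empty-set case explicitly, while the paper instead separates $h=1$ from $h>1$, neither of which changes the substance.
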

    \begin{proof} If $h=1$, then $G_m(\ell, a)=\{((\ell, m))\}$ by  Proposition~\ref{propc},  and the result follows. Now, we assume $h > 1$. The if part follows easily because $((\ell, m))=\gcd(g_1, ..., g_h)$ (Corollary~\ref{coro1}). For the converse, by Proposition~\ref{pr2} part~(\ref{pr22}), $((\ell, m)) \in \{g_1,..., g_h\}$. Now, take $g_i=((\ell, m))$, $x_i=a/((\ell, m))$ and $x_j=0$ for any $j \neq i$. \end{proof}

We conclude this section with one proposition about the elements of $G_m(\ell)$ for the case when $\ell$ and $m$ are relatively prime.
\begin{proposition}\label{typeg1}
If $\gcd(\ell, m)=1$, then $G_m(\ell)$ is equal to the set of positive divisors of $m$.
\end{proposition}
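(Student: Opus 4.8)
The plan is to prove equality of the two sets by establishing the two inclusions, using the valuation notation $\nu_p$ set up at the start of Section~\ref{sec2}. One inclusion requires no hypothesis at all: if $g \in G_m(\ell)$ then by definition $g = \gcd(g\ell, m)$, and a gcd of $m$ with any integer divides $m$, so $g$ is a positive divisor of $m$. Hence $G_m(\ell)$ is always contained in the set of positive divisors of $m$.

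For the reverse inclusion I would fix a positive divisor $g$ of $m$ and check $\gcd(g\ell,m)=g$ one prime at a time, i.e.\ verify $\min\big(\nu_p(g)+\nu_p(\ell),\,\nu_p(m)\big)=\nu_p(g)$ for every prime $p$. The hypothesis $\gcd(\ell,m)=1$ means $\min(\nu_p(\ell),\nu_p(m))=0$ for all $p$; together with $\nu_p(g)\le \nu_p(m)$ (since $g\mid m$) this forces the desired equality in the two possible cases $\nu_p(\ell)=0$ and $\nu_p(m)=0$. A slicker route is to quote the Remark following Proposition~\ref{typeg2}, which gives $G_m(\ell)=\{\,m/d : d\mid m,\ \gcd(d,\ell)=1\,\}$: when $\gcd(\ell,m)=1$, every divisor $d$ of $m$ is automatically coprime to $\ell$, so $d\mapsto m/d$ runs over precisely the divisors of $m$. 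Alternatively one may specialize Proposition~\ref{typeg2}: with $\gcd(\ell,m)=1$ no prime divides both $\ell$ and $m$, so condition~2 is vacuous and in condition~1 only case~(\ref{p1}) can occur, leaving exactly the requirement $g\mid m$.

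I do not expect any genuine obstacle here; the statement is essentially a special case of the earlier structural results. The only thing to be careful about is the routine $\nu_p$-bookkeeping in the direct argument, and that is entirely avoided by appealing to the Remark after Proposition~\ref{typeg2} or to Proposition~\ref{typeg2} itself.
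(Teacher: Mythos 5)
Your proof is correct, and its core coincides with the paper's: the paper handles the whole statement in one line by observing that $\gcd(\ell,m)=1$ gives $\gcd(g\ell,m)=\gcd(g,m)$, so that $g\in G_m(\ell)$ exactly when $g=\gcd(g,m)$, i.e.\ when $g$ is a positive divisor of $m$. Your prime-by-prime check that $\min\big(\nu_p(g)+\nu_p(\ell),\nu_p(m)\big)=\nu_p(g)$ is precisely an unpacked proof of that identity, and your unconditional first inclusion ($g\in G_m(\ell)$ implies $g\mid m$) is the same observation the paper makes. The only genuinely different element is your pair of alternative routes: specializing Proposition~\ref{typeg2} (condition~2 becomes vacuous and only case~(b) of condition~1 can occur, leaving exactly $g\mid m$), or invoking the description $G_m(\ell)=\{m/d : d\mid m,\ \gcd(d,\ell)=1\}$ from the remark following that proposition. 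These derive the statement as a corollary of the structural results already proved, at the price of leaning on heavier machinery, whereas the paper's argument (and your direct one) is self-contained and shorter. If anything, your write-up is slightly more complete: the paper states the direction "every divisor of $m$ lies in $G_m(\ell)$" only implicitly through the chain of equalities, while you verify both inclusions explicitly.
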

\begin{proof}
As $\gcd(\ell,m)=1$ then $\gcd(g\ell,m)=\gcd(g,m)$. Any $g$ that satisfies the relation  $g=\gcd(g\ell,m)=\gcd(g,m)$ is a positive divisor of $m$.
\end{proof}

\section{Roots of permutations}\label{sec3}
Let $\sigma$ be an $n$-permutation and let $m$ be a fixed positive integer. A permutation $\tau \ \in \ S_n$ that satisfies $\tau^m=\sigma$ may or may not exist.  If such $\tau$ exists, it is called an $m$th root of $\sigma$. The following theorem is due to A. Knopfmacher and R. Warlimont  \cite[p. 148, Theorem 4.8.2]{wilf}, and it gives a characterization of the $n$-permutations that have  $m$th roots. 
\begin{theorem}\label{root}  Let $m$ be a positive integer. A permutation $\sigma$ has an $m$th root if and only if for every $\ell=1,2,...$ the number of $\ell$-cycles that $\sigma$ has is divisible by $((\ell,m))$.
\end{theorem}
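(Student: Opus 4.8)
The plan is to track how the cycle structure of a permutation behaves under the $m$th power map and then reduce the existence of an $m$th root of $\sigma$ to a collection of independent problems, one for each cycle length $\ell$, each of which is settled by Proposition~\ref{props}.

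First I would record the elementary fact about powers of a single cycle: if $\tau_0$ is a $d$-cycle, then $\tau_0^m$ is a product of $\gcd(m,d)$ pairwise disjoint cycles, each of length $d/\gcd(m,d)$. Since disjoint cycles commute, writing $\tau$ as a product of its disjoint cycles shows that the cycles of $\tau^m$ of length $\ell$ are exactly those produced by the cycles of $\tau$ whose length $d$ satisfies $d/\gcd(m,d)=\ell$; such a $d$-cycle contributes $g:=\gcd(m,d)$ cycles of length $\ell$ to $\tau^m$, and then $d=g\ell$ with $\gcd(g\ell,m)=g$, i.e. $g\in G_m(\ell)$ in the notation of Section~\ref{sec2}. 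This immediately gives the ``only if'' direction: if $\tau^m=\sigma$, then for each $\ell$ the $a_\ell$ cycles of $\sigma$ of length $\ell$ are partitioned into blocks whose sizes all lie in $G_m(\ell)$, so $a_\ell=\sum_i g_i x_i$ for non-negative integers $x_i$ and the elements $g_i$ of $G_m(\ell)$; by Proposition~\ref{props} this forces $((\ell,m))\mid a_\ell$.

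For the ``if'' direction I would run the construction in reverse. Assume $((\ell,m))\mid a_\ell$ for every $\ell$. By Proposition~\ref{props}, for each $\ell$ the $a_\ell$ cycles of $\sigma$ of length $\ell$ can be partitioned into blocks, each of some size $g\in G_m(\ell)$. Given one such block of $g$ cycles of length $\ell$, put $d:=g\ell$; from $g=\gcd(g\ell,m)$ one gets $g\mid m$ and $\gcd(m/g,\ell)=1$. On the $d=g\ell$ points of the block I would build a single $d$-cycle $\tau_0$ by labelling its points $y_0,\dots,y_{d-1}$ (with $\tau_0\colon y_i\mapsto y_{i+1\bmod d}$) so that the orbits of $\tau_0^m$, namely the residue classes modulo $g$, traced out by steps of size $m$, coincide one-by-one with the $g$ prescribed $\ell$-cycles; the coprimality $\gcd(m/g,\ell)=1$ is exactly what makes each residue class a single $\ell$-orbit under $\tau_0^m$, so the labelling is well defined. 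Taking the product of all these $\tau_0$'s over all blocks and all $\ell$ yields $\tau$ with $\tau^m=\sigma$.

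The main obstacle I expect is this reverse construction of the block cycle $\tau_0$: one must check that stepping by $m$ modulo $d=g\ell$ visits each of the $g$ residue classes mod $g$ in a single $\ell$-cycle and that the induced $\ell$-cycle can be matched to any prescribed one. This is the only point where more than bookkeeping is needed, and it is precisely where the defining condition $g\in G_m(\ell)$ (equivalently $g\mid m$ and $\gcd(m/g,\ell)=1$) is used; everything else is the splitting of the $a_\ell$ cycles into admissible blocks, which is exactly the content of Proposition~\ref{props}.
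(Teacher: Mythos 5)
Your proposal is correct and takes essentially the approach the paper itself relies on: the paper does not reprove Theorem~\ref{root} (it cites Knopfmacher--Warlimont via Wilf's Theorem 4.8.2), but the machinery of Section~\ref{sec3} is exactly your argument --- a $d$-cycle raised to the $m$th power splits into $\gcd(d,m)$ cycles of length $d/\gcd(d,m)$, so the admissible block sizes are the elements of $G_m(\ell)$, divisibility of $a_\ell$ by $((\ell,m))$ is equivalent to solvability of the block equation by Proposition~\ref{props}, and the converse combines each block of $g$ disjoint $\ell$-cycles into a single $g\ell$-cycle just as in Wilf's proof. Your labelling construction (using $\gcd(m/g,\ell)=1$ to match the induced $\ell$-cycles to the prescribed ones) is sound, so there is no gap to flag.
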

This theorem gives us information about the cycle structure of permutations that admits $m$th roots. But, if we know that a permutation has $m$th roots, we need a procedure  to get all of its roots (see, for example  \cite[\S 3.3]{ghs}). The first important observation is that we can build any $m$th root $\tau$ of $\sigma$ if we work with cycles of $\sigma$ with different lengths separately.  Indeed, if $\tau$ is an $m$th root of $\sigma$, with $\tau=C_1 \cdots C_s$ its disjoint cycle factorization, then $\sigma=\tau^m=C_1^m \cdots C_s^m$, where $C_i^m$ consists in $\gcd(|C_i|, m)$ cycles of length $|C_i|/\gcd(|C_i|, m)$ each.  Let $\sigma_\ell$ be the part of $\sigma$, that consists in the product of  all cycles $D_i$ of length $\ell$ in $\sigma$ ($a_\ell\neq 0$, where $a_\ell$ is the number of cycles of length $\ell$ in $\sigma$). So, we can see permutation $\sigma$ as a product of parts $\sigma_\ell$, where every part $\sigma_\ell$ consist in all the cycles of length $\ell$ in the disjoint cycle factorization of $\sigma$.
Let $\tau_\ell$ be the part of $\tau$ whose $m$th power produces the part $\sigma_\ell$
$$
\tau_\ell^m=\sigma_\ell=\prod_{\mid D_i \mid=\ell} D_i.
$$
and we can find $\tau_\ell$, and therefore $\tau$, working with $\sigma_\ell$ independently. This is for every length $\ell \neq 0$ in cycles of $\sigma$. Now, we need to find the admissible lengths, $|C_i|$, of cycles in $\tau_\ell$. We know that for any cycle $C_i$ in $\tau_\ell$, $|C_i|=\ell \gcd(|C_i|, m)$, and we write $g=\gcd(|C_i|, m)$. We need to find $g \leq a_\ell$, in such a way that $g=\gcd(g\ell,m)$. Note that this number exists because we are assuming that the permutation has $m$th roots, and by Theorem~\ref{root} and Proposition~\ref{pr2} (part~\ref{pr22}), $((\ell, m)) \in G_m(\ell, a_\ell)$. Thus, any $g$ cycles of $\sigma_\ell$, $D_{i_1}, \cdots, D_{i_g}$, can be combined in a suitable way (see, for example, proof of Theorem 3 in \cite{annin} or proof of Theorem 4.8.2 in \cite{wilf}) into one $g\ell$-cycle $C_i$ for $\tau_\ell$ with $C_i^m=D_{i_1} \cdots D_{i_g}$. 

Now, to obtain all the $m$th roots of $\sigma$, we build the sets $G_m(\ell, a_\ell)$ and $\EE_m(\ell,a_\ell)$, for every $\ell$ with $a_\ell \neq0$. Theorem~\ref{root} and Proposition~\ref{props} shows that $\EE_m(\ell, a_\ell)$ is not empty if and only if $((\ell, m))$ divides $a_\ell$. The elements of $\EE_m(\ell,a_\ell)$  represents the different ways in which we can group the cycles of $\sigma_\ell$. This is, for $\varepsilon \in \EE_m(\ell,a_\ell)$, the coordinates $\varepsilon_1,\varepsilon_2, \dots$ of $\varepsilon$ and the coordinates $g_1,g_2,...$ of the associate vector ${\bf g}$ of $G_m(\ell, a_\ell)$ means that $\tau_\ell$ has $\varepsilon_1$ cycles of length $ g_1 \ell$, $\varepsilon_2$ cycles of length $ g_2 \ell$, etc.

\section{Proof of Theorems~\ref{the1} and \ref{the2}}\label{sec4}
In this section, we  present the proofs of two of our main results. Theorem~\ref{the1} provides an explicit expression for the number of $m$th roots of permutations of type ${\bf a}$, and Theorem~\ref{the2} provides a generating function for this number. We use the next notation: Let $i, j$ be any positive integers. For an $n$-permutation of Type $(i)^j$ we mean that this permutation has $j$ cycles of length $i$, and $n=ij$. First, we prove the following proposition. 
\begin{proposition}\label{npc}
Let $\ell, m \in \nn$ be fixed. Let $g \in G_m(\ell)$ and  $p \in \nn$. Let $\sigma$ be any permutation of Type $(\ell)^{gp}$. Then the number of $m$th roots of $\sigma$, $\tau$, of Type $(g\ell)^p$ is
$$
\frac{(gp)!\ell^{p(g-1)}}{g^pp!}.
$$ 
\end{proposition}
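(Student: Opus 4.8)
The plan is to split the construction of such a $\tau$ into two independent combinatorial choices and count each. Write $X$ for the underlying $g\ell p$-element set, and let $D_1,\dots,D_{gp}$ denote the $\ell$-cycles of $\sigma$. If $\tau=C_1\cdots C_p$ is an $m$th root of $\sigma$ of Type $(g\ell)^p$ with disjoint cycle factorization into $g\ell$-cycles $C_i$, then $\sigma=\tau^m=C_1^m\cdots C_p^m$, and by the discussion in Section~\ref{sec3} each $C_i^m$ consists of $\gcd(g\ell,m)=g$ cycles of length $g\ell/g=\ell$ supported on $\mathrm{supp}(C_i)$; here we use $g\in G_m(\ell)$. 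Thus $\tau$ determines, first, a partition of $\{D_1,\dots,D_{gp}\}$ into $p$ blocks of size $g$ (the $i$th block being the set of cycles of $C_i^m$), and second, for each block $B$, a $g\ell$-cycle $C$ on $\bigcup_{D\in B}\mathrm{supp}(D)$ with $C^m=\prod_{D\in B}D$. Conversely any such data reassembles into a unique $\tau$ of the required type, since the blocks are supported on pairwise disjoint sets. Hence the number in question equals
$$\frac{(gp)!}{(g!)^p\,p!}\cdot N^p,$$
where $\frac{(gp)!}{(g!)^p p!}$ counts the set partitions of a $gp$-element set into $p$ blocks of size $g$, and $N$ is the number of $g\ell$-cycles $C$ on a fixed $g\ell$-element set whose $m$th power equals a fixed product of $g$ disjoint $\ell$-cycles on that set; by an obvious conjugation bijection $N$ is independent of the chosen block and of the chosen $\ell$-cycles.

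The core of the argument is the evaluation $N=(g-1)!\,\ell^{\,g-1}$. First I would record the cycle structure of a power of a single long cycle: if $C=(x_0\ x_1\ \cdots\ x_{g\ell-1})$ then $C^m(x_i)=x_{i+m}$ with indices read modulo $g\ell$, and since $g=\gcd(g\ell,m)$ divides $m$, the orbit of an index $i$ under repeatedly adding $m$ is exactly the residue class of $i$ modulo $g$ (it is the subgroup $\langle m\rangle=\langle g\rangle$ of $\zz/g\ell\zz$, of order $\ell$). Therefore $C^m=E_0E_1\cdots E_{g-1}$, where $E_j=(x_j\ x_{j+m}\ x_{j+2m}\ \cdots)$ is the $\ell$-cycle through the entries in positions $\equiv j\pmod g$. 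Next I would count the arrangements $(x_0,\dots,x_{g\ell-1})$ of the $g\ell$-element set for which $C^m$ equals the prescribed product $D_1\cdots D_g$: a choice of $x_0$ (there are $g\ell$ of them) forces $E_0$ to be the cycle $D_{i_0}$ whose support contains $x_0$, hence determines every entry in positions $\equiv 0\pmod g$; then $x_1$ must lie in one of the other $g-1$ cycles (it cannot lie in $\mathrm{supp}(D_{i_0})$, else $E_1=D_{i_0}=E_0$ although $E_0$ and $E_1$ occupy disjoint sets of positions), giving $(g-1)\ell$ choices and determining all entries in positions $\equiv 1\pmod g$; inductively $x_j$ admits $(g-j)\ell$ choices. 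Every arrangement obtained this way is legitimate (distinct entries, because the supports of the $g$ pairwise distinct cycles $D_{i_0},\dots,D_{i_{g-1}}$ partition $X$) and satisfies $C^m=\prod_i D_i$ by construction, so the number of valid arrangements is $g\ell\cdot(g-1)\ell\cdots\ell=g!\,\ell^{\,g}$. Since each $g\ell$-cycle is represented by exactly $g\ell$ such arrangements, namely its cyclic rotations, we get $N=g!\,\ell^{\,g}/(g\ell)=(g-1)!\,\ell^{\,g-1}$.

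Finally, substituting and simplifying with $(g!)^p=g^p((g-1)!)^p$ gives
$$\frac{(gp)!}{(g!)^p\,p!}\,\bigl((g-1)!\,\ell^{\,g-1}\bigr)^p=\frac{(gp)!\,\ell^{\,p(g-1)}}{g^p\,p!},$$
which is the claimed expression.

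The step I expect to be the main obstacle is the bookkeeping in the computation of $N$: one must carefully justify that adding $m$ to an index cycles precisely through the residue class modulo $g$ (using $g\mid m$ together with $\gcd(g\ell,m)=g$), that the partial choices $x_0,\dots,x_{j-1}$ consistently fill all positions in residue classes $0,\dots,j-1$ modulo $g$, and that forcing $x_j$ to lie in a hitherto-unused cycle is both necessary and sufficient for the partial arrangement to extend to a genuine one with the desired $m$th power. Everything else — the partition count, the conjugation symmetry for $N$, and the final algebra — is routine.
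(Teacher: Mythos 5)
Your proof is correct and follows essentially the same route as the paper's: partition the $gp$ $\ell$-cycles into $p$ unordered bundles of $g$ (contributing $\frac{(gp)!}{(g!)^p p!}$) and multiply by the $p$th power of the number of $g\ell$-cycles whose $m$th power is a prescribed product of $g$ disjoint $\ell$-cycles. The only difference is that the paper obtains the per-bundle count $(g-1)!\,\ell^{g-1}$ by invoking the procedure in Wilf's Theorem 4.8.2, whereas you verify that same count directly (arrangements counted as $g!\,\ell^{g}$, divided by the $g\ell$ cyclic rotations), which is a correct, self-contained substitute for the citation.
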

\begin{proof}
From the $g p$ cycles, we need to build $p$ cycles of length $g \ell$ for $\tau$. This we make by grouping the cycles in $p$ bundles of $g$ cycles each ($C_1, C_2, \ldots , C_{g}$), and then, we build with each bundle, a cycle of length $g\ell$. In how many ways can we  group the bundles? As no matter the order of the bundles, this problem is reduced to count the number of unordered partitions of a set of cardinality $gp$ in $p$ equal parts. This number is 
$$\frac{ (gp)!}{(g!)^{p} p!}.$$
For each of the $p$ bundles ($C_1, C_2, \ldots,  C_{g}$), we need to arrange the elements of cycles $C_1, C_2, \ldots,  C_{g}$ in a cycle $D$    that satisfies $D^m=C_1 \cdots C_{g}$. We can use the procedure in the proof of Theorem 4.8.2 \cite[\S 4.8]{wilf}), applying it to all the possible combinations of cycles $C_1, C_2, \ldots,  C_{g}$ and their elements in order to obtain all the possible different cycles $D$. With this procedure we obtain $(g  - 1)! \ell^{g - 1}$ different such cycles. As we have $p$ bundles, with $g$ disjoint cycles each, we have $((g  - 1)! \ell^{g - 1} )^{p}$ different bundles of cycles ($D_1,D_2, \cdots ,D_p$) with which we can build the different $m$th roots. Therefore the number of $m$th roots of the required Type is
$$
\frac{(gp)!}{(g!)^pp!}((g  - 1)! \ell^{g - 1})^p=\frac{(gp)!\ell^{p(g-1)}}{g^pp!}.
$$
\end{proof}
\subsection{Proof of Theorem~\ref{the1}} 
Let $m$ be a positive integer, let $\sigma$ be an $n$-permutation of type ${\bf a}$, i.e. $\sigma$ has $a_\ell$ cycles of length $\ell$ for every $\ell \in [n]$. As we see in Section~\ref{sec3}, we can build $\tau$ from $\sigma$ working separately with each part $\sigma_\ell$, $a_\ell \neq 0$. For every $\ell$-cycle in $\sigma$ we build the sets $G_m(\ell, a_\ell)$, and $\EE_m(\ell,a_\ell)$. By Proposition~\ref{props}, if $\sigma_\ell$ has an $m$th root, then $\EE_m(\ell,a_\ell)$ will be a non-empty set. We will count all the $m$th roots of $\sigma_\ell$. Take $\varepsilon$ in  $\EE_m(\ell,a_\ell)$, and for every $i \in [h]$, $h=|G_m(\ell, a_\ell)|$, we will build $\varepsilon_i$ $g_i \ell$-cycles for  $\tau_\ell$. From the $a_\ell$ cycles of $\sigma_\ell$, choose $h$ subsets of $g_i \varepsilon_i$ cycles each one, this can be made in
$$\frac{a_\ell!}{\prod_{i=1}^h  (  g_i \varepsilon_i)!  }$$
ways. Now, from every one of the $g_i \varepsilon_i$ cycles, we build $\varepsilon_i$ cycles of length $g_i \ell$. By proposition~\ref{npc} we have 
$$\frac{ (g_i\varepsilon_i)!}{(g_i!)^{\varepsilon_i} \varepsilon_i!}((g_i  - 1)! \ell^{g_i - 1} )^{\varepsilon_i}.$$
As this is for every $i\in [h]$ and all the cycles are disjoint, by the principle of multiplication we have

\begin{eqnarray*}
    a_\ell !\prod_{i=1}^h\frac{1}{(g_i \varepsilon_i)! }  \frac{(g_i \varepsilon_i)!}{ {(g_i!)}^{\varepsilon_i}\varepsilon_i!}  \left( (g_i - 1)! \ell^{g_i - 1} \right)^{\varepsilon_i},
\end{eqnarray*}
$m$th roots of $\sigma_\ell$ that consist of $\varepsilon_i$ cycles of length $g_i \ell$, for every $i \in [h]$. This expression reduces to
 \begin{eqnarray*}
     a_\ell ! \prod_{i=1}^h\frac{\ell^{(g_i - 1)\varepsilon_i}}{g_i^{\varepsilon_i}\varepsilon_i!}.
\end{eqnarray*}
Finally, we sum over all the elements in $\EE_m(\ell,a_\ell)$. Repeat the process for every cycle length $\ell$ in $\sigma$ with $a_\ell \neq 0$. So we have that the number of $m$th roots of a permutation of type ${\bf a}$ is
 \begin{eqnarray*}
 r^{(m)}({\bf a})=\prod_{\substack{\ell \geq 1\\a_\ell \neq 0}} a_\ell ! \left( \sum_{ \varepsilon \ \in \EE_m(\ell,a_\ell)} \    \prod_{i=1}^h \frac{\ell^{(g_i - 1)\varepsilon_i}}{ {g_i}^{\varepsilon_i}\varepsilon_i!}   \right).
\end{eqnarray*}
Note that $r^{(m)}({\bf a})$ is zero if and only if $ \EE_m(\ell,a_\ell)$ is an empty set for some $ \ell$, i. e. if $a_\ell$ is not a multiple of $((\ell,m))$.
\subsection{A generating function: Proof of Theorem~\ref{the2}}\label{sec5}
First we obtain a generating function for permutations of Type $(\ell)^{gp}$.

\begin{proposition}\label{npd}
Let $\ell$, $m$ be fixed positive integers. Let $g \in G_m(\ell)$. Let $p \in \nn_0$. Let $\sigma$ be any permutation of Type $(\ell)^{gp}$. Let $f(gp)$ be the number of $m$th roots of Type $(g\ell)^p$ of $\sigma$. Then
\begin{eqnarray*}
\sum_{p \geq 0} f(gp)\frac{t_\ell^{gp}}{(gp)!} =\exp\Big(\frac{\ell^{(g-1)}}{g}t_\ell^g\Big).
\end{eqnarray*}
\end{proposition}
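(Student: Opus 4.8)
The plan is to read off the claim directly from Proposition~\ref{npc}, which already computes $f(gp)$ explicitly. The heart of the matter is purely formal: once we know $f(gp)=\frac{(gp)!\,\ell^{p(g-1)}}{g^p p!}$, we substitute into the left-hand sum and recognize the result as the exponential series. So the first step is to recall, from Proposition~\ref{npc}, that for $p\geq 1$ the number of $m$th roots of $\sigma$ (of Type $(\ell)^{gp}$) that have Type $(g\ell)^p$ equals $\frac{(gp)!\,\ell^{p(g-1)}}{g^p p!}$, and to check the boundary case $p=0$ separately: when $p=0$ the permutation $\sigma$ is the identity on the empty set (no cycles), its unique $m$th root of Type $(g\ell)^0$ is the empty permutation, so $f(0)=1$, which also matches the formula $\frac{(g\cdot 0)!\,\ell^{0}}{g^0\,0!}=1$.

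Next I would simply substitute. For each $p\geq 0$,
\begin{eqnarray*}
f(gp)\,\frac{t_\ell^{gp}}{(gp)!}
=\frac{(gp)!\,\ell^{p(g-1)}}{g^p p!}\cdot\frac{t_\ell^{gp}}{(gp)!}
=\frac{1}{p!}\left(\frac{\ell^{\,g-1}}{g}\,t_\ell^{g}\right)^{p},
\end{eqnarray*}
since $\ell^{p(g-1)}=(\ell^{g-1})^p$ and $t_\ell^{gp}=(t_\ell^g)^p$. Summing over $p\geq 0$ gives
\begin{eqnarray*}
\sum_{p\geq 0} f(gp)\,\frac{t_\ell^{gp}}{(gp)!}
=\sum_{p\geq 0}\frac{1}{p!}\left(\frac{\ell^{\,g-1}}{g}\,t_\ell^{g}\right)^{p}
=\exp\!\Big(\frac{\ell^{\,g-1}}{g}\,t_\ell^{g}\Big),
\end{eqnarray*}
which is exactly the asserted identity. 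This is an identity of formal power series in the single variable $t_\ell$, so no convergence issue arises and the rearrangement of the sum is legitimate term by term.

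There is essentially no obstacle here: the proposition is a bookkeeping reformulation of Proposition~\ref{npc} in the language of exponential generating functions, and the only thing one must be slightly careful about is the degenerate term $p=0$ and the fact that the relevant exponent is $p(g-1)$ rather than $g-1$ (so that the base of the exponential involves $t_\ell^g$, not $t_\ell$). If anything needs a word of justification, it is the harmless observation that collecting the coefficient of $t_\ell^{gp}$ and dividing by $(gp)!$ produces precisely $\frac{1}{p!}$ times the $p$th power of a fixed monomial, which is what turns the series into $\exp$ of that monomial; this will later be the building block that, multiplied over all $\ell$ and all $g\in G_m(\ell)$, yields the generating function of Theorem~\ref{the2}.
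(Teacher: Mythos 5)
Your proposal is correct and matches the paper's own argument: the paper likewise substitutes the count $f(gp)=\frac{(gp)!\,\ell^{p(g-1)}}{g^p p!}$ from Proposition~\ref{npc} into the sum and recognizes the exponential series, with the $p=0$ term handled as a formal convention exactly as you do. Nothing is missing.
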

\begin{proof}
Use Proposition~\ref{npc} to obtain

\begin{eqnarray*}
\sum_{p \geq 0} f(gp)\frac{t_\ell^{gp}}{(gp)!} &=&\sum_{p\geq 0}\frac{(gp)!\ell^{p(g-1)}}{g^pp!}\frac{t_\ell^{gp}}{(gp)!}\\
&=&\sum_{p\geq0}\frac{\ell^{p(g-1)}}{g^p}\frac{t_\ell^{gp}}{(p)!}\\
&=&\exp\Big(\frac{\ell^{(g-1)}}{g}t_\ell^g\Big).
\end{eqnarray*}
\end{proof}
Note that if $p=0$, $f(gp)=1$, but $p=0$  does not have sense in this context, we use it only for technical purposes. Now, we give a generating function for the number of $m$th roots of a permutation of type {\bf a},  for $m$ fixed. 

{\bf Theorem 2.} For $n=a_1+2a_2+\cdots +na_n$, the coefficient of $\frac{t_1^{a_1} \cdots t_{n}^{a_n}}{a_1! \cdots a_n!}$ in the expansion of
\begin{equation}\label{gfg}
\exp\Big(\sum_{\ell \geq 1}\sum_{g \in G_m(\ell)}\frac{\ell^{g-1}}{g}t_\ell^g\Big),
\end{equation}
is the number the $m$th roots of an $n$-permutation of type ${\bf a}=(a_1,...,a_n)$.

\begin{proof}
We expand equation (\ref{gfg}) and we look the coefficients of our interest. First note that for any $\ell$ the only factors that contribute to exponent of $t^{a_\ell}_\ell$ ($a_\ell \neq 0$) are the factors in
$$\prod_{g \in G_m(\ell)} e^{\frac{\ell^{g-1}}{g}t_\ell^g}=\prod_{g \in G_m(\ell)}\Big(\sum_{j\geq 0}\frac{\ell^{(g-1)j}}{g^j}t_\ell^{gj}\Big)\frac{1}{j!},$$
with $a_\ell=g_1j_1+\cdots +g_kj_k$, where $k= |G_m(\ell) |$, ${\bf g}=(g_i,...,g_k)$ is the associate vector of $G_m(\ell)$  and $j_1,...,j_k$ is any non negative solution of equation $g_1x_1+\cdots +g_kx_k=a_\ell$. The coefficient of any $t^{a_\ell}_\ell$ is
$$
\sum_{g_1j_1+\cdots +g_kj_k=a_\ell} \prod_{i=1}^k   \frac{\ell^{(g_i-1)j_i}}{g_i^{j_i}}\frac{1}{j_i!}.
$$
For the coefficient of $t_\ell^{a_{\ell}}/ a_{\ell} !$ multiply the previous expression by $a_\ell !$. Clearly the coefficient of $\frac{t_1^{a_1} \cdots t_{n}^{a_n}}{a_1! \cdots a_n!}$ comes from equation (\ref{gfg}) when we take $\ell$ from 1 to $n$ and this coefficient is
$$
\prod_{\substack{\ell \geq 1\\a_\ell \neq 0}} a_\ell ! \sum_{g_1j_1+\cdots +g_kj_k=a_\ell} \prod_{i=1}^k   \frac{\ell^{(g_i-1)j_i}}{g_i^{j_i}}\frac{1}{j_i!}.
$$
Note that this expression is equivalent to the right hand side of equation in Theorem~\ref{the1} by changing $g_1j_1+\cdots +g_kj_k=a_\ell$ in the index of summation by $\varepsilon \in \EE_m(\ell,a_\ell)$, where $\varepsilon=(j_1, ..., j_k) \in \EE_m(\ell,a_\ell)=\{\varepsilon \in \nn_0^k \;:\;g_1j_1+\cdots +g_kj_k=a_\ell\}$.
\end{proof}
Another proof can be obtained using Proposition~\ref{npd}. For fixed length $\ell$, we build the set $G_m(\ell)$, and for fixed $g \in G_m(\ell)$ we can obtain the generating function as in Proposition~\ref{npd}. Then we use properties of generating functions to obtain $\prod_{g \in G_m(\ell)}e^{\frac{\ell^{g-1}}{g}t_\ell^g}$. As for different $\ell$, the variables $t_\ell$ are different, then we can obtain the desired generating function as the product  $\prod_{\ell \geq 1}\prod_{g \in G_m(\ell)}e^{\frac{\ell^{g-1}}{g}t_\ell^g}.$

The next corollary is for the special case when $m$ is equal to a prime number $p$. Note that by Propositions~\ref{typeg2} and~\ref{typeg1}, we have that if $\gcd(\ell,p)=1$ then $G_p(\ell)=\{1,p\}$ and for $\gcd(\ell,p)=p$, $G_p(\ell)=\{p\}$. 
\begin{corollary}
Let $p$ be a fixed prime number. For $n=a_1+2a_2+\cdots +na_n$, the coefficient of $\frac{t_1^{a_1} \cdots t_{n}^{a_n}}{a_1! \cdots a_n!}$ in the expansion of

$$
\exp\Big(\sum_{i \geq 1}\frac{i^{p-1}}{p}t_i^p+\sum_{\substack{j \geq 1\\\gcd(j,p)=1}}t_j\Big)
$$
is the number of $p$-th roots of an $n$-permutation of type ${\bf a}=(a_1,...,a_n)$.
\end{corollary}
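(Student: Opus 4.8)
The plan is to derive this corollary directly from Theorem~\ref{the2} by specializing $m$ to a prime $p$ and simplifying the inner sum $\sum_{g \in G_m(\ell)}\frac{\ell^{g-1}}{g}t_\ell^g$ using an explicit description of $G_p(\ell)$. The key observation, already flagged in the remark preceding the corollary, is that Propositions~\ref{typeg2} and~\ref{typeg1} pin down $G_p(\ell)$ completely: if $\gcd(\ell,p)=1$ then by Proposition~\ref{typeg1} the set $G_p(\ell)$ consists of the positive divisors of $p$, namely $\{1,p\}$; and if $p \mid \ell$, then any $g \in G_p(\ell)$ has every prime divisor dividing $p$ (so $g$ is a power of $p$) and, since $p \mid \ell$, condition~(\ref{p2}) of Proposition~\ref{typeg2} forces $\nu_p(g)=\nu_p(p)=1$, whence $g=p$ and $G_p(\ell)=\{p\}$.

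With this dichotomy in hand, I would split the outer index $\ell \geq 1$ in the exponent of Theorem~\ref{the2} into two classes. For $\ell$ with $p \mid \ell$, the inner sum has the single term $g=p$, contributing $\frac{\ell^{p-1}}{p}t_\ell^p$. For $\ell$ with $\gcd(\ell,p)=1$, the inner sum has the two terms $g=1$ and $g=p$, contributing $\frac{\ell^{0}}{1}t_\ell^1 + \frac{\ell^{p-1}}{p}t_\ell^p = t_\ell + \frac{\ell^{p-1}}{p}t_\ell^p$. Adding these over all $\ell$, the terms $\frac{\ell^{p-1}}{p}t_\ell^p$ appear for every $\ell \geq 1$ regardless of divisibility by $p$, while the linear terms $t_\ell$ appear exactly for those $\ell$ coprime to $p$. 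Thus
\begin{equation*}
\sum_{\ell \geq 1}\sum_{g \in G_p(\ell)}\frac{\ell^{g-1}}{g}t_\ell^g = \sum_{i \geq 1}\frac{i^{p-1}}{p}t_i^p + \sum_{\substack{j \geq 1\\\gcd(j,p)=1}}t_j,
\end{equation*}
which is exactly the argument of $\exp$ in the corollary. Applying Theorem~\ref{the2} then yields that the coefficient of $\frac{t_1^{a_1}\cdots t_n^{a_n}}{a_1!\cdots a_n!}$ in the expansion is the number of $p$-th roots of an $n$-permutation of type $\mathbf{a}$, which is the claim.

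There is essentially no obstacle here: the corollary is a pure specialization, and the only content is the computation of $G_p(\ell)$, which is already supplied by the cited propositions. The one point deserving a line of care is the case $p \mid \ell$: one must invoke part~(\ref{p2}) of Proposition~\ref{typeg2} to rule out higher powers of $p$ and confirm $G_p(\ell) = \{p\}$ rather than, say, $\{1,p\}$; in particular $1 \notin G_p(\ell)$ here, consistent with Proposition~\ref{gequal1} since $\gcd(\ell,p)=p\neq 1$. Once that is noted, the reindexing of the double sum into the two single sums is routine bookkeeping and the proof concludes immediately by appeal to Theorem~\ref{the2}.
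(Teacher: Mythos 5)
Your proposal is correct and follows exactly the route the paper intends: the paper's own justification is the remark immediately preceding the corollary, which computes $G_p(\ell)=\{1,p\}$ when $\gcd(\ell,p)=1$ and $G_p(\ell)=\{p\}$ when $p\mid\ell$ via Propositions~\ref{typeg2} and~\ref{typeg1}, and then specializes Theorem~\ref{the2}. Your regrouping of the double sum and the careful check that $1\notin G_p(\ell)$ when $p\mid\ell$ are exactly the (routine) details the paper leaves implicit.
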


\section{A property of the probability $p_m(n)$}\label{sec6}
Let $m$ be a power of a prime number  $p$. In this section we include a proof of an interesting property about the probability, $p_m(n)$, that an $n$-permutation chosen at random has an $m$th root. In the work of B\'{o}na et al. \cite{bona}, there are a combinatorial proof of the identity $p_m(n)=p_m(n+1)$, with $m$ a prime number, and for all $n$ not congruent to $-1$ mod $m$. We prove  the analogous result for the case when $m$ is a power of prime number $q$, first by using the generating function of the number of permutations that admit an $m$th root (see \cite{cher} for a different proof). We use the terminology in \cite{wilf}. For $q \geq 1$, let $\exp_q(x)$ denote the formal series defined by
  $$
  \exp_q(x)=\sum_{i \geq 0}\frac{x^{iq}}{(iq)!}.
  $$
The following statement can be found as Theorem 4.8.3 in H. Wilf's generatingfunctionology (p. 149, \cite{wilf})
\begin{theorem}\label{thgfnr}
Let $r(n,m)$ be the number of $n$-permutations that have an $m$th root. Then
$$
\sum_{n=0}^\infty r(n,m)\frac{x^n}{n!}=\prod_{\ell=1}^\infty \exp_{((\ell,m))}\Big(\frac{x^\ell}{\ell}\Big).
$$

\end{theorem}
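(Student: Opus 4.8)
The plan is to reduce the statement to the cycle-type characterization of Theorem~\ref{root} and then recognize the right-hand side as the exponential generating function obtained by handling the different cycle lengths independently. First I would recall the classical fact that the number of $n$-permutations of a fixed cycle type ${\bf a}=(a_1,a_2,\dots)$, where $\sum_{\ell\ge1}\ell a_\ell=n$, equals $n!\big/\prod_{\ell\ge1}\ell^{a_\ell}a_\ell!$. By Theorem~\ref{root}, such a permutation has an $m$th root exactly when $((\ell,m))$ divides $a_\ell$ for every $\ell$. Summing the cycle-type count over all admissible types therefore gives
\begin{equation*}
r(n,m)=\sum_{\substack{\sum_{\ell\ge1}\ell a_\ell=n\\ ((\ell,m))\,\mid\,a_\ell\ \text{for all }\ell}}\ \frac{n!}{\prod_{\ell\ge1}\ell^{a_\ell}a_\ell!}.
\end{equation*}

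Next I would divide by $n!$ and sum over $n\ge0$. Writing $q_\ell:=((\ell,m))$, the double sum collapses into a single sum over all sequences $(a_1,a_2,\dots)$ of non-negative integers with finitely many nonzero entries and with $q_\ell\mid a_\ell$ for every $\ell$:
\begin{equation*}
\sum_{n\ge0}r(n,m)\frac{x^n}{n!}=\sum_{\substack{(a_1,a_2,\dots)\\ q_\ell\,\mid\,a_\ell\ \text{for all }\ell}}\ \prod_{\ell\ge1}\frac{x^{\ell a_\ell}}{\ell^{a_\ell}a_\ell!}.
\end{equation*}
Since each $a_\ell$ ranges independently over the non-negative multiples of $q_\ell$, the sum factorizes over $\ell$, and the $\ell$-th factor is $\sum_{i\ge0}(x^\ell/\ell)^{iq_\ell}/(iq_\ell)!=\exp_{q_\ell}(x^\ell/\ell)$ by the definition of $\exp_{q}$. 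This yields $\sum_{n\ge0}r(n,m)x^n/n!=\prod_{\ell\ge1}\exp_{((\ell,m))}(x^\ell/\ell)$, which is exactly the asserted identity.

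The only delicate point is the legitimacy of the rearrangements: the interchange of the sum over $n$ with the sum over cycle types, and the factorization of an infinite sum into an infinite product. I would handle this by regarding everything as an identity of formal power series in $x$: because $\exp_{q_\ell}(x^\ell/\ell)=1+O(x^\ell)$, for each fixed degree $N$ only the factors with $\ell\le N$ contribute, and on each side the coefficient of $x^N$ is a finite sum, so the manipulations are valid coefficient by coefficient. I expect this bookkeeping — rather than any genuine combinatorial difficulty — to be the main obstacle; the combinatorial content is just the observation that the building block ``one cycle of length $\ell$'' has exponential generating function $x^\ell/\ell$, together with the divisibility restriction on the number of such cycles supplied by Theorem~\ref{root}.
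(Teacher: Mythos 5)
Your argument is correct: it is the standard derivation of this generating function from Theorem~\ref{root} together with the formula $n!/\prod_{\ell}\ell^{a_\ell}a_\ell!$ for the number of permutations of a given cycle type, and the formal-power-series justification of the factorization is adequate. Note that the paper itself offers no proof of Theorem~\ref{thgfnr} --- it simply cites it as Theorem 4.8.3 of Wilf's \emph{generatingfunctionology} --- and your proof is essentially the one given in that source, so there is no substantive divergence to compare.
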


\begin{remark}\label{re2}
Note that if $m$ is a prime power, $m=p^r$, then for any $l \in \nn$
\begin{eqnarray*}
\left(\left( l, m \right) \right) = \begin{cases}
               1 & \text{if $\gcd(l,p) = 1$} \\
              m & \text{otherwise }
            \end{cases}
\end{eqnarray*}
\end{remark}
The next proposition gives one property of $p_m(n)$.
\begin{proposition}\label{prob}
Let $p$ be a prime.  Let $p_m(n)$ be the probability that an $n$-permutation chosen uniformly at random has an $m$th root. If $m=p^r$, $r \in \nn$, then for all $j\in \nn_0$ we have
\begin{eqnarray*}
p_m(jp)= p_m(jp+1) = \cdots = p_m(jp+(p-1))
\end{eqnarray*}
\end{proposition}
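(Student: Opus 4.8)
The plan is to read everything off the generating function in Theorem~\ref{thgfnr}, specialized to $m=p^r$. Since $\exp_1(x)=\sum_{i\geq0}x^i/i!=e^x$, Remark~\ref{re2} lets us split the product into the ``$p\nmid\ell$'' part, which is a product of exponentials $e^{x^\ell/\ell}$, and the ``$p\mid\ell$'' part, which is a product of factors $\exp_m(x^\ell/\ell)$. So
\begin{equation*}
F(x):=\sum_{n\geq0}r(n,m)\frac{x^n}{n!}=\Big(\prod_{\substack{\ell\geq1\\p\nmid\ell}}e^{x^\ell/\ell}\Big)\cdot\Big(\prod_{\substack{\ell\geq1\\p\mid\ell}}\exp_m\big(\tfrac{x^\ell}{\ell}\big)\Big).
\end{equation*}

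First I would simplify the first factor. Using $\prod_{\ell\geq1}e^{x^\ell/\ell}=\exp(-\log(1-x))=\frac{1}{1-x}$ together with $\sum_{\ell\geq1,\,p\mid\ell}\frac{x^\ell}{\ell}=\frac1p\sum_{k\geq1}\frac{(x^p)^k}{k}=-\frac1p\log(1-x^p)$, dividing the two products gives
\begin{equation*}
\prod_{\substack{\ell\geq1\\p\nmid\ell}}e^{x^\ell/\ell}=\frac{1}{1-x}\exp\Big(\tfrac1p\log(1-x^p)\Big)=\frac{(1-x^p)^{1/p}}{1-x}.
\end{equation*}
For the second factor, write $\ell=pk$: every power of $x$ occurring in $\exp_m(x^{pk}/(pk))=\sum_{i\geq0}(x^{pk}/(pk))^{im}/(im)!$ is a multiple of $p$, so the whole product $\prod_{p\mid\ell}\exp_m(x^\ell/\ell)$ is a formal power series in $x^p$. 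Since $(1-x^p)^{1/p}$ is also a power series in $x^p$, I would conclude that there is a formal power series $G(y)=\sum_{k\geq0}c_ky^k$ (with $c_0=1$, since $F(0)=1$) such that
\begin{equation*}
F(x)=\frac{G(x^p)}{1-x}.
\end{equation*}

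To finish, multiply $G(x^p)=\sum_{k\geq0}c_kx^{pk}$ by $\frac{1}{1-x}=\sum_{n\geq0}x^n$ and compare the coefficient of $x^N$ on both sides: this yields $r(N,m)/N!=\sum_{k=0}^{\lfloor N/p\rfloor}c_k$. Because there are exactly $N!$ permutations of $[N]$, $p_m(N)=r(N,m)/N!=\sum_{k=0}^{\lfloor N/p\rfloor}c_k$, which depends on $N$ only through $\lfloor N/p\rfloor$. Since $\lfloor N/p\rfloor=j$ for every $N\in\{jp,jp+1,\dots,jp+(p-1)\}$, we get $p_m(jp)=p_m(jp+1)=\cdots=p_m(jp+(p-1))$, as claimed. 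There is no deep obstacle here: the work is the formal-power-series bookkeeping, and the one point that genuinely needs checking is that every factor coming from the $p\mid\ell$ part really is a series in $x^p$ (so that $F(x)(1-x)$ collapses to a function of $x^p$); everything else is routine.
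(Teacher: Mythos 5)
Your proposal is correct and follows essentially the same route as the paper's own (first) proof: both specialize Theorem~\ref{thgfnr} via Remark~\ref{re2}, rewrite the generating function as $\frac{1}{1-x}$ times a power series in $x^p$ (namely $(1-x^p)^{1/p}\prod_{j\geq1}\exp_m\big(x^{jp}/(jp)\big)$), and read off that $p_m(N)$ depends only on $\lfloor N/p\rfloor$. Your write-up is, if anything, slightly more explicit about why the $p\mid\ell$ factors are series in $x^p$ and about the coefficient comparison.
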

\begin{proof}
By Remark~\ref{re2} the generating function in Theorem~\ref{thgfnr} can be written as
\begin{eqnarray*}
\prod_{l \ \in \ \nn} \exp_{\left( \left( l,m \right) \right)} \left( \frac{x^l}{l}\right) =
\prod_{\substack{l \ \in \ \nn \\ \gcd(l,m) = 1}} \exp\left(\frac{x^l}{l}\right) \prod_{j=1}^{\infty}
 \exp_m  \left( \frac{x^{jp}}{jp}\right)
\end{eqnarray*}

which can be written in the form

\begin{eqnarray*}
\prod_{j = 1}^{\infty} \exp\left(\frac{x^j}{j}\right)
\prod_{j = 1}^{\infty} \exp\left(-\frac{x^{jp}}{jp}\right)
\prod_{j=1}^{\infty} \exp_m \left( \frac{x^{jp}}{jp}\right),
\end{eqnarray*}

or

\begin{eqnarray*}
\frac{1}{\left( 1-x\right)}\left( 1-x^p\right)^{1/p}
\prod_{j=1}^{\infty} \exp_m \left( \frac{x^{jp}}{jp}\right).
\end{eqnarray*}

Notice that, due to the fact that $m = p^r$, we can write $\left( 1-x^p\right)^{1/p}
\prod_{j=1}^{\infty} \exp_m \left(\frac{x^{jp}}{jp}\right)   $
as a function of the form $G\left( x \right) = \sum_{j=0}^{\infty} b_j x^{pj}$, 
for any adequate election of $b_j$. Now $\frac{1}{\left( 1-x\right)}G\left( x \right)$, where $G(x):=\left( 1-x^p\right)^{1/p}
\prod_{j=1}^{\infty} \exp_m \left( x^{jp} /jp\right) $
is the power series of some function in the variable $x^p$.
Note that $p_m(n)$ is the coefficient of $x^n$ in the expansion of
\begin{eqnarray*}
\sum_{n=0}^\infty r(n,m) \frac{x^n}{n!}=\frac{1}{\left( 1-x\right)}
G\left( x \right) = \frac{1}{\left( 1-x\right)} \sum_{k=0}^{\infty} b_k x^{kp}
\end{eqnarray*}

Now, if we take $n = kp$, for any $k=0, 1,...$  we have that
\begin{eqnarray*}
p_m(kp) &= & b_0 + b_1 + b_2 + \cdots + b_k \\
p_m(kp+ 1)& = &  b_0 + b_1 + b_2 + \cdots + b_k \\
 &\vdots   & \\
p_m(kp+ (p-1)) & = &  b_0 + b_1 + b_2 + \cdots + b_k\\
\end{eqnarray*}
\end{proof}
\subsection{A second proof of Proposition~\ref{prob}}\label{secbi}
In this section, we give a second proof of Proposition~\ref{prob}. Here we deduce Proposition~\ref{prob} from Corollary 2.12 in [4]. We start by
recalling some notation and definitions used in \cite{bona}. Let $POWER_m(n)$ be the set of $n$-permutations that have at least one $m$th root. Let $DIV_{\rho, m}(n)$ denote the set of $n$-permutations whose cycles of length a multiple of $m$ have type $\rho$ and  let $div_{\rho, m}(n)=|DIV_{\rho, m}(n)|/n!$. For $\pi \in DIV_{\rho, m}(n)$, we denote with $\pi_{(m)}$ (respectively $\pi_{(\sim m)}$) the part of $\pi$ consisting of the cycles of lengths which are (respectively not are) multiples of $m$. By Theorem~\ref{root} and Remark~\ref{re2}, if $m=p^r$ is a power of a prime $p$, then the set $POWER_{p^r}(n)$ consist of all $n$-permutations whose number of cycles of length a multiple of $p$ is divisible by $p^r$. Corollary 2.12 in \cite{bona} says that if $p$ does not divide $n+1$ then $div_{\rho, p}(n)=div_{\rho, m}(n+1)$. So in order to prove Proposition~\ref{prob} we only need to show that if $n+1$ is not a multiple of $p$ then the only possible cycle types for $\pi_{(p)}$ are the same for $POWER_m(n)$ as for $POWER_m(n+1)$. We will prove that this is the case. Let $\rho$ be any cycle type of $\pi_{(p)}$ with $\pi \in POWER_m(n)$ then, clearly, $\rho$ is a cycle type of $\pi'_{(p)}$ for some $\pi'$ in $POWER_m(n+1)$. Let $\pi \in POWER_m(n+1)$. Since $p$ does not divide $n+1$, then $\pi_{(\sim p)}$ cannot be empty. Then the $n$-permutation $\pi'$ with $\pi'_{(p)}=\pi_{(p)}$ and with the part $\pi'_{(\sim p)}$ (possible empty) containing as fixed points all the elements (different that $n+1$) in $\pi_{(\sim p)}$, has $p^r$th root and hence $\rho$ is also the cycle type of a permutation $\pi'$ in $POWER_m(n)$.\\

{\bf Acknowledgements.} The authors would like to thank E. Ugalde and L. Glebsky for many useful comments. Also the authors would like to thank the anonymous referees for its carefully reading and valuable comments and suggestions. Section~\ref{secbi} was not present in the original version. Its addition was inspired on a referee's thoughtful question. Part of this work was made when R. Moreno was staying at Campus Jalpa, UAZ. The final version was written while Rivera-Mart\'{i}nez was a postdoctoral fellow at the University of Vienna, supported by the European Research Council (ERC) grant of Goulnara Arzhantseva, grant agreement n$^\text{o}$ 259527.

{\bf Jes\'{u}s Lea\~{n}os}\\[-0.8ex]
\small Unidad Acad\'{e}mica de Matem\'{a}ticas,\\[-0.8ex]
\small Universidad Aut\'{o}noma de Zacatecas,\\[-0.8ex]
\small Zacatecas, Zac. CP 98000, M\'{e}xico.\\
\small E-mail: \texttt{jelema@uaz.edu.mx}\\
{\bf Rutilo Moreno}\\[-0.8ex]
\small Instituto de F\'{i}sica,\\[-0.8ex]
\small Universidad Aut\'{o}noma de San Luis Potos\'{i},\\[-0.8ex]
\small San Luis Potos\'{i}, SLP.  CP 78210, M\'{e}xico.\\
\small E-mail: \texttt{rutilo.moreno@gmail.com}\\
{\bf Luis Manuel Rivera-Mart\'{i}nez},\\[-0.8ex]
\small Ingenier\'{i}a El\'{e}ctrica Jalpa,\\[-0.8ex]
\small Universidad Aut\'{o}noma de Zacatecas,\\[-0.8ex]
\small E-mail: \texttt{luismanuel.rivera@gmail.com}

\end{document}